\newtheorem{theorem}{Theorem}
\theoremstyle{plain}
\newtheorem{axiom}{Axiom}
\newtheorem{conjecture}{Conjecture}
\newtheorem{corollary}{Corollary}
\newtheorem{definition}{Definition}
\newtheorem{example}{Example}
\newtheorem{exercise}{Exercise}
\newtheorem{lemma}{Lemma}
\newtheorem{proposition}{Proposition}
\newtheorem{remark}{Remark}
\numberwithin{equation}{section}
\numberwithin{theorem}{section}
\numberwithin{algorithm}{section}
\numberwithin{axiom}{section}
\numberwithin{case}{section}
\numberwithin{claim}{section}
\numberwithin{conclusion}{section}
\numberwithin{condition}{section}
\numberwithin{conjecture}{section}
\numberwithin{corollary}{section}
\numberwithin{criterion}{section}
\numberwithin{definition}{section}
\numberwithin{example}{section}
\numberwithin{exercise}{section}
\numberwithin{lemma}{section}
\numberwithin{notation}{section}
\numberwithin{problem}{section}
\numberwithin{proposition}{section}
\numberwithin{remark}{section}
\numberwithin{solution}{section}
\chardef\@x10\chardef\@xv60
\def\tcitime{
\def\@time{%
  \@minute\time\@hour\@minute\divide\@hour\@xv
  \ifnum\@hour<\@x 0\fi\the\@hour:%
  \multiply\@hour\@xv\advance\@minute-\@hour
  \ifnum\@minute<\@x 0\fi\the\@minute
  }}%
\def\QCTOpt[#1]#2{%
  \def\QCTOptB{#1}
  \def\QCTOptA{#2}
}
\def\QCTNOpt#1{%
  \def\QCTOptA{#1}
  \let\QCTOptB\empty
}
\def\Qct{%
  \@ifnextchar[{%
    \QCTOpt}{\QCTNOpt}
}
\def\QCBOpt[#1]#2{%
  \def\QCBOptB{#1}
  \def\QCBOptA{#2}
}
\def\QCBNOpt#1{%
  \def\QCBOptA{#1}
  \let\QCBOptB\empty
}
\def\Qcb{%
  \@ifnextchar[{%
    \QCBOpt}{\QCBNOpt}
}
\def\PrepCapArgs{%
  \ifx\QCBOptA\empty
    \ifx\QCTOptA\empty
      {}%
    \else
      \ifx\QCTOptB\empty
        {\QCTOptA}%
      \else
        [\QCTOptB]{\QCTOptA}%
      \fi
    \fi
  \else
    \ifx\QCBOptA\empty
      {}%
    \else
      \ifx\QCBOptB\empty
        {\QCBOptA}%
      \else
        [\QCBOptB]{\QCBOptA}%
      \fi
    \fi
  \fi
}
\def\GRAPHICSPS#1{%
 \ifcase\GRAPHICSTYPE
   \special{ps: #1}%
 \or
   \special{language "PS", include "#1"}%
 \fi
}%
\def\graffile#1#2#3#4{%
    \bgroup
    \leavevmode
    \@ifundefined{bbl@deactivate}{\def~{\string~}}{\activesoff}
    \raise -#4 \BOXTHEFRAME{%
        \hbox to #2{\raise #3\hbox to #2{\null #1\hfil}}}%
    \egroup
}%
\def\draftbox#1#2#3#4{%
 \leavevmode\raise -#4 \hbox{%
  \frame{\rlap{\protect\tiny #1}\hbox to #2%
   {\vrule height#3 width\z@ depth\z@\hfil}%
  }%
 }%
}%
\newif\ifwasdraft
\def\GRAPHIC#1#2#3#4#5{%
 \ifnum\draft=\@ne\draftbox{#2}{#3}{#4}{#5}%
  \else\graffile{#1}{#3}{#4}{#5}%
  \fi
 }%
\def\addtoLaTeXparams#1{%
    \edef\LaTeXparams{\LaTeXparams #1}}%
\newif\ifBoxFrame \BoxFramefalse
\newif\ifOverFrame \OverFramefalse
\newif\ifUnderFrame \UnderFramefalse
\def\BOXTHEFRAME#1{%
   \hbox{%
      \ifBoxFrame
         \frame{#1}%
      \else
         {#1}%
      \fi
   }%
}
\def\doFRAMEparams#1{\BoxFramefalse\OverFramefalse\UnderFramefalse\readFRAMEparams#1\end}%
\def\readFRAMEparams#1{%
 \ifx#1\end%
  \let\next=\relax
  \else
  \ifx#1i\dispkind=\z@\fi
  \ifx#1d\dispkind=\@ne\fi
  \ifx#1f\dispkind=\tw@\fi
  \ifx#1t\addtoLaTeXparams{t}\fi
  \ifx#1b\addtoLaTeXparams{b}\fi
  \ifx#1p\addtoLaTeXparams{p}\fi
  \ifx#1h\addtoLaTeXparams{h}\fi
  \ifx#1X\BoxFrametrue\fi
  \ifx#1O\OverFrametrue\fi
  \ifx#1U\UnderFrametrue\fi
  \ifx#1w
    \ifnum\draft=1\wasdrafttrue\else\wasdraftfalse\fi
    \draft=\@ne
  \fi
  \let\next=\readFRAMEparams
  \fi
 \next
 }%
\def\IFRAME#1#2#3#4#5#6{%
      \bgroup
      \let\QCTOptA\empty
      \let\QCTOptB\empty
      \let\QCBOptA\empty
      \let\QCBOptB\empty
      #6%
      \parindent=0pt%
      \leftskip=0pt
      \rightskip=0pt
      \setbox0 = \hbox{\QCBOptA}%
      \@tempdima = #1\relax
      \ifOverFrame
          \typeout{This is not implemented yet}%
          \show\HELP
      \else
         \ifdim\wd0>\@tempdima
            \advance\@tempdima by \@tempdima
            \ifdim\wd0 >\@tempdima
               \textwidth=\@tempdima
               \setbox1 =\vbox{%
                  \noindent\hbox to \@tempdima{\hfill\GRAPHIC{#5}{#4}{#1}{#2}{#3}\hfill}\\%
                  \noindent\hbox to \@tempdima{\parbox[b]{\@tempdima}{\QCBOptA}}%
               }%
               \wd1=\@tempdima
            \else
               \textwidth=\wd0
               \setbox1 =\vbox{%
                 \noindent\hbox to \wd0{\hfill\GRAPHIC{#5}{#4}{#1}{#2}{#3}\hfill}\\%
                 \noindent\hbox{\QCBOptA}%
               }%
               \wd1=\wd0
            \fi
         \else
            \ifdim\wd0>0pt
              \hsize=\@tempdima
              \setbox1 =\vbox{%
                \unskip\GRAPHIC{#5}{#4}{#1}{#2}{0pt}%
                \break
                \unskip\hbox to \@tempdima{\hfill \QCBOptA\hfill}%
              }%
              \wd1=\@tempdima
           \else
              \hsize=\@tempdima
              \setbox1 =\vbox{%
                \unskip\GRAPHIC{#5}{#4}{#1}{#2}{0pt}%
              }%
              \wd1=\@tempdima
           \fi
         \fi
         \@tempdimb=\ht1
         \advance\@tempdimb by \dp1
         \advance\@tempdimb by -#2%
         \advance\@tempdimb by #3%
         \leavevmode
         \raise -\@tempdimb \hbox{\box1}%
      \fi
      \egroup%
}%
\def\DFRAME#1#2#3#4#5{%
 \begin{center}
     \let\QCTOptA\empty
     \let\QCTOptB\empty
     \let\QCBOptA\empty
     \let\QCBOptB\empty
     \ifOverFrame 
        #5\QCTOptA\par
     \fi
     \GRAPHIC{#4}{#3}{#1}{#2}{\z@}
     \ifUnderFrame 
        \nobreak\par\nobreak#5\QCBOptA
     \fi
 \end{center}%
 }%
\def\FFRAME#1#2#3#4#5#6#7{%
 \begin{figure}[#1]%
  \let\QCTOptA\empty
  \let\QCTOptB\empty
  \let\QCBOptA\empty
  \let\QCBOptB\empty
  \ifOverFrame
    #4
    \ifx\QCTOptA\empty
    \else
      \ifx\QCTOptB\empty
        \caption{\QCTOptA}%
      \else
        \caption[\QCTOptB]{\QCTOptA}%
      \fi
    \fi
    \ifUnderFrame\else
      \label{#5}%
    \fi
  \else
    \UnderFrametrue%
  \fi
  \begin{center}\GRAPHIC{#7}{#6}{#2}{#3}{\z@}\end{center}%
  \ifUnderFrame
    #4
    \ifx\QCBOptA\empty
      \caption{}%
    \else
      \ifx\QCBOptB\empty
        \caption{\QCBOptA}%
      \else
        \caption[\QCBOptB]{\QCBOptA}%
      \fi
    \fi
    \label{#5}%
  \fi
  \end{figure}%
 }%
\def\makeactives{
  \catcode`\"=\active
  \catcode`\;=\active
  \catcode`\:=\active
  \catcode`\'=\active
  \catcode`\~=\active
}
   \gdef\activesoff{%
      \def"{\string"}
      \def;{\string;}
      \def:{\string:}
      \def'{\string'}
      \def~{\string~}
    }
\def\FRAME#1#2#3#4#5#6#7#8{%
 \bgroup
 \ifnum\draft=\@ne
   \wasdrafttrue
 \else
   \wasdraftfalse%
 \fi
 \def\LaTeXparams{}%
 \dispkind=\z@
 \def\LaTeXparams{}%
 \doFRAMEparams{#1}%
 \ifnum\dispkind=\z@\IFRAME{#2}{#3}{#4}{#7}{#8}{#5}\else
  \ifnum\dispkind=\@ne\DFRAME{#2}{#3}{#7}{#8}{#5}\else
   \ifnum\dispkind=\tw@
    \edef\@tempa{\noexpand\FFRAME{\LaTeXparams}}%
    \@tempa{#2}{#3}{#5}{#6}{#7}{#8}%
    \fi
   \fi
  \fi
  \ifwasdraft\draft=1\else\draft=0\fi{}%
  \egroup
 }%
\def\TEXUX#1{"texux"}
\def\limfunc#1{\mathop{\rm #1}}%
\def\func#1{\mathop{\rm #1}\nolimits}%
\long\def\QQQ#1#2{%
     \long\expandafter\def\csname#1\endcsname{#2}}%
\long\def\QQA#1#2{}%
\def\QTR#1#2{{\csname#1\endcsname #2}}
\def\EXPAND#1[#2]#3{}%
\def\NOEXPAND#1[#2]#3{}%
\def\LaTeXparent#1{}%
\def\ChildStyles#1{}%
\def\ChildDefaults#1{}%
\def\QTagDef#1#2#3{}%
  \providecommand{\UNICODE}[2][]{}
\def\QQfnmark#1{\footnotemark}
 \def\abstract{%
  \if@twocolumn
   \section*{Abstract (Not appropriate in this style!)}%
   \else \small 
   \begin{center}{\bf Abstract\vspace{-.5em}\vspace{\z@}}\end{center}%
   \quotation 
   \fi
  }%
   \def\registered{\relax\ifmmode{}\r@gistered
                    \else$\m@th\r@gistered$\fi}%
 \def\r@gistered{^{\ooalign
  {\hfil\raise.07ex\hbox{$\scriptstyle\rm\text{R}$}\hfil\crcr
  \mathhexbox20D}}}}{}%
\newdimen\theight
\def\Column{%
 \vadjust{\setbox\z@=\hbox{\scriptsize\quad\quad tcol}%
  \theight=\ht\z@\advance\theight by \dp\z@\advance\theight by \lineskip
  \kern -\theight \vbox to \theight{%
   \rightline{\rlap{\box\z@}}%
   \vss
   }%
  }%
 }%
\def\qed{%
 \ifhmode\unskip\nobreak\fi\ifmmode\ifinner\else\hskip5\p@\fi\fi
 \hbox{\hskip5\p@\vrule width4\p@ height6\p@ depth1.5\p@\hskip\p@}%
 }%
\def\miss{\hbox{\vrule height2\p@ width 2\p@ depth\z@}}%
\def\tcol#1{{\baselineskip=6\p@ \vcenter{#1}} \Column}  %
\def\newfmtname{LaTeX2e}
  \DeclareOldFontCommand{\rm}{\normalfont\rmfamily}{\mathrm}
  \DeclareOldFontCommand{\sf}{\normalfont\sffamily}{\mathsf}
  \DeclareOldFontCommand{\tt}{\normalfont\ttfamily}{\mathtt}
  \DeclareOldFontCommand{\bf}{\normalfont\bfseries}{\mathbf}
  \DeclareOldFontCommand{\it}{\normalfont\itshape}{\mathit}
  \DeclareOldFontCommand{\sl}{\normalfont\slshape}{\@nomath\sl}
  \DeclareOldFontCommand{\sc}{\normalfont\scshape}{\@nomath\sc}
\def\alpha{{\Greekmath 010B}}%
\def\beta{{\Greekmath 010C}}%
\def\gamma{{\Greekmath 010D}}%
\def\delta{{\Greekmath 010E}}%
\def\epsilon{{\Greekmath 010F}}%
\def\zeta{{\Greekmath 0110}}%
\def\eta{{\Greekmath 0111}}%
\def\theta{{\Greekmath 0112}}%
\def\iota{{\Greekmath 0113}}%
\def\kappa{{\Greekmath 0114}}%
\def\lambda{{\Greekmath 0115}}%
\def\mu{{\Greekmath 0116}}%
\def\nu{{\Greekmath 0117}}%
\def\xi{{\Greekmath 0118}}%
\def\pi{{\Greekmath 0119}}%
\def\rho{{\Greekmath 011A}}%
\def\sigma{{\Greekmath 011B}}%
\def\tau{{\Greekmath 011C}}%
\def\upsilon{{\Greekmath 011D}}%
\def\phi{{\Greekmath 011E}}%
\def\chi{{\Greekmath 011F}}%
\def\psi{{\Greekmath 0120}}%
\def\omega{{\Greekmath 0121}}%
\def\varepsilon{{\Greekmath 0122}}%
\def\vartheta{{\Greekmath 0123}}%
\def\varpi{{\Greekmath 0124}}%
\def\varrho{{\Greekmath 0125}}%
\def\varsigma{{\Greekmath 0126}}%
\def\varphi{{\Greekmath 0127}}%
\def\nabla{{\Greekmath 0272}}
\def\FindBoldGroup{%
   {\setbox0=\hbox{$\mathbf{x\global\edef\theboldgroup{\the\mathgroup}}$}}%
}
\def\Greekmath#1#2#3#4{%
    \if@compatibility
        \ifnum\mathgroup=\symbold
           \mathchoice{\mbox{\boldmath$\displaystyle\mathchar"#1#2#3#4$}}%
                      {\mbox{\boldmath$\textstyle\mathchar"#1#2#3#4$}}%
                      {\mbox{\boldmath$\scriptstyle\mathchar"#1#2#3#4$}}%
                      {\mbox{\boldmath$\scriptscriptstyle\mathchar"#1#2#3#4$}}%
        \else
           \mathchar"#1#2#3#4%
        \fi 
    \else 
        \FindBoldGroup
        \ifnum\mathgroup=\theboldgroup 
           \mathchoice{\mbox{\boldmath$\displaystyle\mathchar"#1#2#3#4$}}%
                      {\mbox{\boldmath$\textstyle\mathchar"#1#2#3#4$}}%
                      {\mbox{\boldmath$\scriptstyle\mathchar"#1#2#3#4$}}%
                      {\mbox{\boldmath$\scriptscriptstyle\mathchar"#1#2#3#4$}}%
        \else
           \mathchar"#1#2#3#4%
        \fi     	    
	  \fi}
\newif\ifGreekBold  \GreekBoldfalse
\let\SAVEPBF=\pbf
\def\pbf{\GreekBoldtrue\SAVEPBF}%
  \newcounter{equationnumber}  
  \def\mathletters{%
     \addtocounter{equation}{1}
     \edef\@currentlabel{\theequation}%
     \setcounter{equationnumber}{\c@equation}
     \setcounter{equation}{0}%
     \edef\theequation{\@currentlabel\noexpand\alph{equation}}%
  }
    \def\BibTeX{{\rm B\kern-.05em{\sc i\kern-.025em b}\kern-.08em
                 T\kern-.1667em\lower.7ex\hbox{E}\kern-.125emX}}}{}%
\def\AmS{{\protect\usefont{OMS}{cmsy}{m}{n}%
                A\kern-.1667em\lower.5ex\hbox{M}\kern-.125emS}}}{}%
\def\@@eqncr{\let\@tempa\relax
    \ifcase\@eqcnt \def\@tempa{& & &}\or \def\@tempa{& &}%
      \else \def\@tempa{&}\fi
     \@tempa
     \if@eqnsw
        \iftag@
           \@taggnum
        \else
           \@eqnnum\stepcounter{equation}%
        \fi
     \fi
     \global\tag@false
     \global\@eqnswtrue
     \global\@eqcnt\z@\cr}
\def\TCItag{\@ifnextchar*{\@TCItagstar}{\@TCItag}}
\def\@TCItag#1{%
    \global\tag@true
    \global\def\@taggnum{(#1)}}
\def\@TCItagstar*#1{%
    \global\tag@true
    \global\def\@taggnum{#1}}
\def\dsum{\mathop{\displaystyle \sum }}%
\def\dbigoplus{\mathop{\displaystyle \bigoplus }}%
\def\dbigcup{\mathop{\displaystyle \bigcup }}%
\let\DOTSI\relax
\def\RIfM@{\relax\ifmmode}%
\def\FN@{\futurelet\next}%
\def\iint{\DOTSI\intno@\tw@\FN@\ints@}%
\def\iiint{\DOTSI\intno@\thr@@\FN@\ints@}%
\def\iiiint{\DOTSI\intno@4 \FN@\ints@}%
\def\idotsint{\DOTSI\intno@\z@\FN@\ints@}%
\def\ints@{\findlimits@\ints@@}%
\newif\iflimtoken@
\newif\iflimits@
\def\findlimits@{\limtoken@true\ifx\next\limits\limits@true
 \else\ifx\next\nolimits\limits@false\else
 \limtoken@false\ifx\ilimits@\nolimits\limits@false\else
 \ifinner\limits@false\else\limits@true\fi\fi\fi\fi}%
\def\multint@{\int\ifnum\intno@=\z@\intdots@                          
 \else\intkern@\fi                                                    
 \ifnum\intno@>\tw@\int\intkern@\fi                                   
 \ifnum\intno@>\thr@@\int\intkern@\fi                                 
 \int}
\def\multintlimits@{\intop\ifnum\intno@=\z@\intdots@\else\intkern@\fi
 \ifnum\intno@>\tw@\intop\intkern@\fi
 \ifnum\intno@>\thr@@\intop\intkern@\fi\intop}%
\def\intic@{%
    \mathchoice{\hskip.5em}{\hskip.4em}{\hskip.4em}{\hskip.4em}}%
\def\negintic@{\mathchoice
 {\hskip-.5em}{\hskip-.4em}{\hskip-.4em}{\hskip-.4em}}%
\def\ints@@{\iflimtoken@                                              
 \def\ints@@@{\iflimits@\negintic@
   \mathop{\intic@\multintlimits@}\limits                             
  \else\multint@\nolimits\fi                                          
  \eat@}
 \else                                                                
 \def\ints@@@{\iflimits@\negintic@
  \mathop{\intic@\multintlimits@}\limits\else
  \multint@\nolimits\fi}\fi\ints@@@}%
\def\intkern@{\mathchoice{\!\!\!}{\!\!}{\!\!}{\!\!}}%
\def\plaincdots@{\mathinner{\cdotp\cdotp\cdotp}}%
\def\intdots@{\mathchoice{\plaincdots@}%
 {{\cdotp}\mkern1.5mu{\cdotp}\mkern1.5mu{\cdotp}}%
 {{\cdotp}\mkern1mu{\cdotp}\mkern1mu{\cdotp}}%
 {{\cdotp}\mkern1mu{\cdotp}\mkern1mu{\cdotp}}}%
\def\RIfM@{\relax\protect\ifmmode}
\def\text{\RIfM@\expandafter\text@\else\expandafter\mbox\fi}
\let\nfss@text\text
\def\text@#1{\mathchoice
   {\textdef@\displaystyle\f@size{#1}}%
   {\textdef@\textstyle\tf@size{\firstchoice@false #1}}%
   {\textdef@\textstyle\sf@size{\firstchoice@false #1}}%
   {\textdef@\textstyle \ssf@size{\firstchoice@false #1}}%
   \glb@settings}
\def\textdef@#1#2#3{\hbox{{%
                    \everymath{#1}%
                    \let\f@size#2\selectfont
                    #3}}}
\newif\iffirstchoice@
\def\Let@{\relax\iffalse{\fi\let\\=\cr\iffalse}\fi}%
\def\vspace@{\def\vspace##1{\crcr\noalign{\vskip##1\relax}}}%
\def\multilimits@{\bgroup\vspace@\Let@
 \baselineskip\fontdimen10 \scriptfont\tw@
 \advance\baselineskip\fontdimen12 \scriptfont\tw@
 \lineskip\thr@@\fontdimen8 \scriptfont\thr@@
 \lineskiplimit\lineskip
 \vbox\bgroup\ialign\bgroup\hfil$\m@th\scriptstyle{##}$\hfil\crcr}%
\def\Sb{_\multilimits@}%
\def\endSb{\crcr\egroup\egroup\egroup}%
\def\Sp{^\multilimits@}%
\newdimen\ex@
\def\rightarrowfill@#1{$#1\m@th\mathord-\mkern-6mu\cleaders
 \hbox{$#1\mkern-2mu\mathord-\mkern-2mu$}\hfill
 \mkern-6mu\mathord\rightarrow$}%
\def\leftarrowfill@#1{$#1\m@th\mathord\leftarrow\mkern-6mu\cleaders
 \hbox{$#1\mkern-2mu\mathord-\mkern-2mu$}\hfill\mkern-6mu\mathord-$}%
\def\leftrightarrowfill@#1{$#1\m@th\mathord\leftarrow
\mkern-6mu\cleaders
 \hbox{$#1\mkern-2mu\mathord-\mkern-2mu$}\hfill
 \mkern-6mu\mathord\rightarrow$}%
\def\overrightarrow{\mathpalette\overrightarrow@}%
\def\overrightarrow@#1#2{\vbox{\ialign{##\crcr\rightarrowfill@#1\crcr
 \noalign{\kern-\ex@\nointerlineskip}$\m@th\hfil#1#2\hfil$\crcr}}}%
\def\overleftarrow{\mathpalette\overleftarrow@}%
\def\overleftarrow@#1#2{\vbox{\ialign{##\crcr\leftarrowfill@#1\crcr
 \noalign{\kern-\ex@\nointerlineskip}$\m@th\hfil#1#2\hfil$\crcr}}}%
\def\overleftrightarrow{\mathpalette\overleftrightarrow@}%
\def\overleftrightarrow@#1#2{\vbox{\ialign{##\crcr
   \leftrightarrowfill@#1\crcr
 \noalign{\kern-\ex@\nointerlineskip}$\m@th\hfil#1#2\hfil$\crcr}}}%
\def\underrightarrow{\mathpalette\underrightarrow@}%
\def\underrightarrow@#1#2{\vtop{\ialign{##\crcr$\m@th\hfil#1#2\hfil
  $\crcr\noalign{\nointerlineskip}\rightarrowfill@#1\crcr}}}%
\def\underleftarrow{\mathpalette\underleftarrow@}%
\def\underleftarrow@#1#2{\vtop{\ialign{##\crcr$\m@th\hfil#1#2\hfil
  $\crcr\noalign{\nointerlineskip}\leftarrowfill@#1\crcr}}}%
\def\underleftrightarrow{\mathpalette\underleftrightarrow@}%
\def\underleftrightarrow@#1#2{\vtop{\ialign{##\crcr$\m@th
  \hfil#1#2\hfil$\crcr
 \noalign{\nointerlineskip}\leftrightarrowfill@#1\crcr}}}%
\def\qopnamewl@#1{\mathop{\operator@font#1}\nlimits@}
\let\nlimits@\displaylimits
\def\setboxz@h{\setbox\z@\hbox}
\def\varlim@#1#2{\mathop{\vtop{\ialign{##\crcr
 \hfil$#1\m@th\operator@font lim$\hfil\crcr
 \noalign{\nointerlineskip}#2#1\crcr
 \noalign{\nointerlineskip\kern-\ex@}\crcr}}}}
 \def\rightarrowfill@#1{\m@th\setboxz@h{$#1-$}\ht\z@\z@
  $#1\copy\z@\mkern-6mu\cleaders
  \hbox{$#1\mkern-2mu\box\z@\mkern-2mu$}\hfill
  \mkern-6mu\mathord\rightarrow$}
\def\leftarrowfill@#1{\m@th\setboxz@h{$#1-$}\ht\z@\z@
  $#1\mathord\leftarrow\mkern-6mu\cleaders
  \hbox{$#1\mkern-2mu\copy\z@\mkern-2mu$}\hfill
  \mkern-6mu\box\z@$}
\def\projlim{\qopnamewl@{proj\,lim}}
\def\injlim{\qopnamewl@{inj\,lim}}
\def\varinjlim{\mathpalette\varlim@\rightarrowfill@}
\def\varprojlim{\mathpalette\varlim@\leftarrowfill@}
\def\varliminf{\mathpalette\varliminf@{}}
\def\varliminf@#1{\mathop{\underline{\vrule\@depth.2\ex@\@width\z@
   \hbox{$#1\m@th\operator@font lim$}}}}
\def\varlimsup{\mathpalette\varlimsup@{}}
\def\varlimsup@#1{\mathop{\overline
  {\hbox{$#1\m@th\operator@font lim$}}}}
\def\align{\@verbatim \frenchspacing\@vobeyspaces \@alignverbatim
You are using the "align" environment in a style in which it is not defined.}
\let\csname endalign*\endcsname =\endtrivlist
\def\alignat{\@verbatim \frenchspacing\@vobeyspaces \@alignatverbatim
You are using the "alignat" environment in a style in which it is not defined.}
\let\csname endalignat*\endcsname =\endtrivlist
\def\xalignat{\@verbatim \frenchspacing\@vobeyspaces \@xalignatverbatim
You are using the "xalignat" environment in a style in which it is not defined.}
\let\csname endxalignat*\endcsname =\endtrivlist
\def\gather{\@verbatim \frenchspacing\@vobeyspaces \@gatherverbatim
You are using the "gather" environment in a style in which it is not defined.}
\let\csname endgather*\endcsname =\endtrivlist
\def\multiline{\@verbatim \frenchspacing\@vobeyspaces \@multilineverbatim
You are using the "multiline" environment in a style in which it is not defined.}
\let\csname endmultiline*\endcsname =\endtrivlist
\def\arrax{\@verbatim \frenchspacing\@vobeyspaces \@arraxverbatim
You are using a type of "array" construct that is only allowed in AmS-LaTeX.}
\def\tabulax{\@verbatim \frenchspacing\@vobeyspaces \@tabulaxverbatim
You are using a type of "tabular" construct that is only allowed in AmS-LaTeX.}
\let\csname endarrax*\endcsname =\endtrivlist
\let\csname endtabulax*\endcsname =\endtrivlist
 \def\endequation{%
     \ifmmode\ifinner 
      \iftag@
        \addtocounter{equation}{-1} 
        $\hfil
           \displaywidth\linewidth\@taggnum\egroup \endtrivlist
        \global\tag@false
        \global\@ignoretrue   
      \else
        $\hfil
           \displaywidth\linewidth\@eqnnum\egroup \endtrivlist
        \global\tag@false
        \global\@ignoretrue 
      \fi
     \else   
      \iftag@
        \addtocounter{equation}{-1} 
        \eqno \hbox{\@taggnum}
        \global\tag@false%
        $$\global\@ignoretrue
      \else
        \eqno \hbox{\@eqnnum}
        $$\global\@ignoretrue
      \fi
     \fi\fi
 } 
 \newif\iftag@ \tag@false
 \def\TCItag{\@ifnextchar*{\@TCItagstar}{\@TCItag}}
 \def\@TCItag#1{%
     \global\tag@true
     \global\def\@taggnum{(#1)}}
 \def\@TCItagstar*#1{%
     \global\tag@true
     \global\def\@taggnum{#1}}
     \def\tag{\@ifnextchar*{\@tagstar}{\@tag}}
     \def\@tag#1{%
         \global\tag@true
         \global\def\@taggnum{(#1)}}
     \def\@tagstar*#1{%
         \global\tag@true
         \global\def\@taggnum{#1}}
\begin{document}
\title{Improved Moser-Trudinger-Onofri inequality under constraints}
\author{Sun-Yung A. Chang}
\address{Department of Mathematics, Princeton University, Fine Hall,
Washington Road, Princeton, NJ 08544}
\email{chang@math.princeton.edu}
\author{Fengbo Hang}
\address{Courant Institute, New York University, 251 Mercer Street, New York
NY 10012}
\email{fengbo@cims.nyu.edu}

\begin{abstract}
A classical result of Aubin states that the constant in
Moser-Trudinger-Onofri inequality on $\mathbb{S}^{2}$ can be imporved for
functions with zero first order moments of the area element. We generalize
it to higher order moments case. These new inequalities bear similarity to a
sequence of Lebedev-Milin type inequalities on $\mathbb{S}^{1}$ coming from
the work of Grenander-Szego on Toeplitz determinants (as pointed out by
Widom). We also discuss the related sharp inequality by a perturbation
method.
\end{abstract}

\maketitle

\section{Introduction\label{sec1}}

Let $\left( M,g\right) $ be a smooth compact Riemann surface without
boundary. For an integrable function $u$ on $M$, we denote%
\begin{equation}
\overline{u}=\frac{1}{\mu \left( M\right) }\int_{M}ud\mu .  \label{eq1.1}
\end{equation}%
Here $\mu $ is the measure associated with the Riemannian metric $g$.

The classical Moser-Trudinger inequality (see \cite{ChY2, F,M}) tells us
that for every $u\in H^{1}\left( M\right) \backslash \left\{ 0\right\} $
with $\overline{u}=0$, we have%
\begin{equation}
\int_{M}e^{4\pi \frac{u^{2}}{\left\Vert \nabla u\right\Vert _{L^{2}\left(
M\right) }^{2}}}d\mu \leq c\left( M,g\right) .  \label{eq1.2}
\end{equation}%
Here $c\left( M,g\right) $ is a positive constant independent of $u$.

A direct consequence of (\ref{eq1.2}) is the following
Moser-Trudinger-Onofri inequality: for every $u\in H^{1}\left( M\right) $
with $\overline{u}=0$, we have%
\begin{equation}
\log \int_{M}e^{2u}d\mu \leq \frac{1}{4\pi }\left\Vert \nabla u\right\Vert
_{L^{2}\left( M\right) }^{2}+c_{1}\left( M,g\right) .  \label{eq1.3}
\end{equation}%
We remark that the inequality (\ref{eq1.3}) has attracted more interest than
the original inequality (\ref{eq1.2}) due to its close relation to Gauss
curvature equation and spectral geometry through the classical Polyakov
formula (see for example \cite{On,OsPS}).

On the standard sphere, it is found in \cite[corollary 2 on p159]{A} that
for $u\in H^{1}\left( \mathbb{S}^{2}\right) $ with $\overline{u}=0$ and $%
\int_{\mathbb{S}^{2}}x_{i}e^{2u\left( x\right) }d\mu \left( x\right) =0$ for 
$i=1,2,3$, the constant $\frac{1}{4\pi }$ in (\ref{eq1.3}) can be lowered
i.e. for any $\varepsilon >0$, we have%
\begin{equation}
\log \left( \frac{1}{4\pi }\int_{\mathbb{S}^{2}}e^{2u}d\mu \right) \leq
\left( \frac{1}{8\pi }+\varepsilon \right) \left\Vert \nabla u\right\Vert
_{L^{2}}^{2}+c_{\varepsilon }.  \label{eq1.4}
\end{equation}%
Here $c_{\varepsilon }$ is a constant depending on $\varepsilon $ only.

A closely related question is to find the best constant in (\ref{eq1.3}) and
(\ref{eq1.4}). In \cite{On}, the best constant $c_{1}\left( M,g\right) $ for
(\ref{eq1.3}) is found on the standard $\mathbb{S}^{2}$. More precisely it
is shown that for $u\in H^{1}\left( \mathbb{S}^{2}\right) $ with $\overline{u%
}=0$, we have%
\begin{equation}
\log \left( \frac{1}{4\pi }\int_{\mathbb{S}^{2}}e^{2u}d\mu \right) \leq 
\frac{1}{4\pi }\left\Vert \nabla u\right\Vert _{L^{2}}^{2}.  \label{eq1.5}
\end{equation}

For (\ref{eq1.4}), it is proved recently in \cite{GuM} that the best
constant $c_{\varepsilon }$ is $0$. In other words, for $u\in H^{1}\left( 
\mathbb{S}^{2}\right) $ with $\overline{u}=0$ and $\int_{\mathbb{S}%
^{2}}x_{i}e^{2u\left( x\right) }d\mu \left( x\right) =0$ for $i=1,2,3$, we
have%
\begin{equation}
\log \left( \frac{1}{4\pi }\int_{\mathbb{S}^{2}}e^{2u}d\mu \right) \leq 
\frac{1}{8\pi }\left\Vert \nabla u\right\Vert _{L^{2}}^{2}.  \label{eq1.6}
\end{equation}%
This confirms a conjecture in \cite{ChY1}.

To motivate our discussion, let us look at some research on $\mathbb{S}^{1}$
which has similar spirit as above. For convenience we let $D$ be the unit
disk in $\mathbb{R}^{2}$. For any $u\in H^{1}\left( D\right) $ with $\int_{%
\mathbb{S}^{1}}ud\theta =0$, the Lebedev-Milin inequality (see \cite[chapter
5]{D}) tells us%
\begin{equation}
\log \left( \frac{1}{2\pi }\int_{\mathbb{S}^{1}}e^{u}d\theta \right) \leq 
\frac{1}{4\pi }\left\Vert \nabla u\right\Vert _{L^{2}\left( D\right) }^{2}.
\label{eq1.7}
\end{equation}%
This should be compared to (\ref{eq1.5}).

On the other hand, as observed in \cite{Wi}, we have a sequence of
Lebedev-Milin type inequalities following from the work of Grenander-Szego 
\cite{GrS} on Toeplitz determinants. More precisely for any integer $m\geq 0$%
, $u\in H^{1}\left( D\right) $ with $\int_{\mathbb{S}^{1}}ud\theta =0$ and $%
\int_{\mathbb{S}^{1}}e^{u}e^{ik\theta }d\theta =0$ for $k=1,\cdots ,m$, we
have%
\begin{equation}
\log \left( \frac{1}{2\pi }\int_{\mathbb{S}^{1}}e^{u}d\theta \right) \leq 
\frac{1}{4\pi \left( m+1\right) }\left\Vert \nabla u\right\Vert
_{L^{2}\left( D\right) }^{2}.  \label{eq1.8}
\end{equation}%
For $m=0$, (\ref{eq1.8}) is just (\ref{eq1.7}). For $m=1$, (\ref{eq1.8}) is
proved in \cite[section 2]{OsPS}. These inequalities should be compared to (%
\ref{eq1.6}). Note that $\cos k\theta $ and $\sin k\theta $ are
eigenfunctions of $-\Delta _{\mathbb{S}^{1}}$ with eigenvalue $k^{2}$. So (%
\ref{eq1.8}) actually tells us we can improve the coefficient of $\left\Vert
\nabla u\right\Vert _{L^{2}\left( D\right) }^{2}$ further if $e^{u}$ is
perpendicular to more eigenfunctions of $-\Delta _{\mathbb{S}^{1}}$. For a
while, people wonder whether we have similar improvements of (\ref{eq1.4})
or (\ref{eq1.6}) on $\mathbb{S}^{2}$. The main aim of this note, as stated
in Theorem \ref{thm1.1} below, is to confirm this guess.

To state the main results, we need some notations. For any nonnegative
integer $k$, we denote%
\begin{eqnarray}
\mathcal{P}_{k} &=&\left\{ \text{all polynomials on }\mathbb{R}^{3}\text{
with degree at most }k\right\} ;  \label{eq1.9} \\
\overset{\circ }{\mathcal{P}}_{k} &=&\left\{ p\in \mathcal{P}_{k}:\int_{%
\mathbb{S}^{2}}pd\mu =0\right\} ;  \label{eq1.10} \\
H_{k} &=&\left\{ \text{all degree }k\text{ homogeneous polynomials on }%
\mathbb{R}^{3}\right\} ;  \label{eq1.11} \\
\mathcal{H}_{k} &=&\left\{ h\in H_{k}:\Delta _{\mathbb{R}^{3}}h=0\right\} .
\label{eq1.12}
\end{eqnarray}%
It is known that 
\begin{equation}
\left. \mathcal{H}_{k}\right\vert _{\mathbb{S}^{2}}=\left\{ \left.
h\right\vert _{\mathbb{S}^{2}}:h\in \mathcal{H}_{k}\right\}  \label{eq1.13}
\end{equation}%
is exactly the eigenspace of $-\Delta _{\mathbb{S}^{2}}$ associated with
eigenvalue $k\left( k+1\right) $. Moreover%
\begin{equation}
\left. \overset{\circ }{\mathcal{P}}_{k}\right\vert _{\mathbb{S}%
^{2}}=\dbigoplus\limits_{i=1}^{k}\left. \mathcal{H}_{i}\right\vert _{\mathbb{%
S}^{2}}.  \label{eq1.14}
\end{equation}%
We refer the reader to \cite[chapter IV]{SW} for these facts.

\begin{definition}
\label{def1.1}Let $m\in \mathbb{N}$, we denote%
\begin{eqnarray}
&&\mathcal{N}_{m}  \label{eq1.15} \\
&=&\left\{ N\in \mathbb{N}:\exists x_{1},\cdots ,x_{N}\in \mathbb{S}^{2}%
\text{ and }\nu _{1},\cdots ,\nu _{N}\in \left[ 0,\infty \right) \text{ s.t. 
}\nu _{1}+\cdots +\nu _{N}=1\right.  \notag \\
&&\left. \text{and for any }p\in \overset{\circ }{\mathcal{P}}_{m}\text{, }%
\nu _{1}p\left( x_{1}\right) +\cdots +\nu _{N}p\left( x_{N}\right) =0\text{.}%
\right\}  \notag \\
&=&\left\{ N\in \mathbb{N}:\exists x_{1},\cdots ,x_{N}\in \mathbb{S}^{2}%
\text{ and }\nu _{1},\cdots ,\nu _{N}\in \left[ 0,\infty \right) \text{ s.t.
for any }p\in \mathcal{P}_{m}\text{,}\right.  \notag \\
&&\left. \nu _{1}p\left( x_{1}\right) +\cdots +\nu _{N}p\left( x_{N}\right) =%
\frac{1}{4\pi }\int_{\mathbb{S}^{2}}pd\mu \text{.}\right\} \text{.}  \notag
\end{eqnarray}

The smallest number in $\mathcal{N}_{m}$ is denoted as $N_{m}$ i.e. $%
N_{m}=\min \mathcal{N}_{m}$.
\end{definition}

The importance of $N_{m}$ lies in the following theorem, which is the main
result of this paper.

\begin{theorem}
\label{thm1.1}Assume $u\in H^{1}\left( \mathbb{S}^{2}\right) $ such that $%
\int_{\mathbb{S}^{2}}ud\mu =0$ (here $\mu $ is the standard measure on $%
\mathbb{S}^{2}$) and for every $p\in \overset{\circ }{\mathcal{P}}_{m}$, $%
\int_{\mathbb{S}^{2}}pe^{2u}d\mu =0$, then for any $\varepsilon >0$, we have%
\begin{equation}
\log \int_{\mathbb{S}^{2}}e^{2u}d\mu \leq \left( \frac{1}{4\pi N_{m}}%
+\varepsilon \right) \left\Vert \nabla u\right\Vert
_{L^{2}}^{2}+c_{\varepsilon }.  \label{eq1.16}
\end{equation}
\end{theorem}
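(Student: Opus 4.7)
My approach is by contradiction, using a blow-up/concentration analysis combined with a Djadli–Malchiodi type improved Moser--Trudinger--Onofri inequality. If (\ref{eq1.16}) fails for some $\varepsilon>0$, take a sequence $u_n\in H^1(\mathbb{S}^2)$ satisfying all hypotheses with $\log\int_{\mathbb{S}^2}e^{2u_n}d\mu-(\frac{1}{4\pi N_m}+\varepsilon)\|\nabla u_n\|_{L^2}^2\to\infty$. The classical Onofri inequality (\ref{eq1.5}) already forces $\|\nabla u_n\|_{L^2}^2\to\infty$. Normalize to probability measures $\sigma_n:=(\int_{\mathbb{S}^2}e^{2u_n}d\mu)^{-1}e^{2u_n}d\mu$. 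The orthogonality $\int_{\mathbb{S}^2}pe^{2u_n}d\mu=0$ for $p\in\overset{\circ }{\mathcal{P}}_m$ translates, after adding the constant piece, into the cubature identity $\int_{\mathbb{S}^2}p\,d\sigma_n=\frac{1}{4\pi}\int_{\mathbb{S}^2}p\,d\mu$ for every $p\in\mathcal{P}_m$, which is preserved under weak-$*$ limits.

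The first key step is a \emph{covering lemma}, essentially a compactness reformulation of Definition~\ref{def1.1}: there exist $r_0,\gamma_0>0$ (depending only on $m$) with the following property. For every probability measure $\sigma$ on $\mathbb{S}^2$ satisfying the cubature identity for $\mathcal{P}_m$, one can choose $N_m$ points $y_1,\dots,y_{N_m}\in\mathbb{S}^2$ with pairwise geodesic distance at least $r_0$ and $\sigma(B(y_i,r_0))\geq\gamma_0$ for each $i$. The proof is by contradiction using weak-$*$ compactness of probability measures: a failure would produce, in the limit, a probability measure carrying nearly all of its mass on the union of fewer than $N_m$ small disks. Letting the disk radius and defect tend to zero yields in the limit a discrete probability measure, supported on at most $N_m-1$ atoms, that still satisfies the cubature identity, thus furnishing an element of $\mathcal{N}_m$ of size strictly less than $N_m$ and contradicting the minimality in Definition~\ref{def1.1}.

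The second main ingredient is an \emph{improved Moser--Trudinger--Onofri inequality} in the spirit of Chen--Li and Djadli--Malchiodi: for every integer $k\geq 1$ and $\varepsilon,\delta,\gamma>0$ there is a constant $C$ such that, whenever $u\in H^1(\mathbb{S}^2)$ has mean zero and the normalized measure $e^{2u}d\mu/\int_{\mathbb{S}^2}e^{2u}d\mu$ carries mass at least $\gamma$ on each of $k$ disks at pairwise distance at least $\delta$, one has $\log\int_{\mathbb{S}^2}e^{2u}d\mu\leq(\frac{1}{4\pi k}+\varepsilon)\|\nabla u\|_{L^2}^2+C$. This is obtained by a partition-of-unity argument adapted to the $k$ separated disks: one chooses a family of Lipschitz cut-offs supported in slightly enlarged disks, applies the standard Onofri inequality (\ref{eq1.5}) locally, and uses the lower bound on the mass in each region to force the Dirichlet energy to distribute across the $k$ localized problems, yielding the factor $1/k$ while pushing the cross-interaction terms into the $\varepsilon$ correction.

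Combining the two: applying the covering lemma to $\sigma_n$ for large $n$ produces $N_m$ disks of radius $r_0$ pairwise separated by $r_0$ on each of which $e^{2u_n}d\mu$ captures a fraction $\geq\gamma_0$ of its total mass. The improved inequality with $k=N_m$ then gives $\log\int_{\mathbb{S}^2}e^{2u_n}d\mu\leq(\frac{1}{4\pi N_m}+\varepsilon)\|\nabla u_n\|_{L^2}^2+C$ for all large $n$, contradicting the blow-up assumption and completing the proof. The main technical obstacle is the improved Moser--Trudinger step — in particular engineering the cut-offs so that the cross-terms can actually be absorbed into the $\varepsilon$ correction — since the covering lemma, though geometrically essential, follows cleanly from the minimality in Definition~\ref{def1.1} once the weak-$*$ compactness framework is set up.
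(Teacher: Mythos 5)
Your proposal is correct but takes a genuinely different route from the paper. The paper's proof normalizes $u_i=v_i/\|\nabla v_i\|_{L^2}$ and studies two defect measures --- the gradient defect $\sigma$ and the weak-$*$ limit $\nu$ of $e^{2m_iu_i}/\int e^{2m_iu_i}d\mu$ --- proving from scratch a refinement of Lions' concentration--compactness (Lemma \ref{lem2.2} and Proposition \ref{prop2.1}) which forces $\nu$ to be atomic, supported precisely where $\sigma(\{x\})\geq 4\pi\alpha$; since $\sigma(M)\leq 1$, there are at most $1/(4\pi\alpha)$ such atoms, while the cubature identity on $\nu$ forces at least $N_m$ of them, yielding $\alpha\leq \frac{1}{4\pi N_m}$, a contradiction. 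You instead factor the argument through two independent lemmas: a geometric covering lemma deduced from the minimality of $N_m$ (cubature for $\mathcal{P}_m$ forces $N_m$ well-separated disks to each carry a definite fraction of the mass), and the Chen--Li / Djadli--Malchiodi improved Moser--Trudinger inequality applied with $k=N_m$. Both routes exploit the same two facts --- that the cubature identity survives weak-$*$ limits and that $N_m$ is minimal --- but you pay for it by invoking the improved Moser--Trudinger inequality as a black box (its proof is itself a localization argument close in spirit to the paper's Lemma \ref{lem2.2}), while the paper's version is self-contained and, as the authors note, adapts to dimensions $\geq 3$ and higher-order Sobolev spaces. One caution on your covering lemma: the statement is correct, but the proof needs more than just passing to a weak-$*$ limit. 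As sketched, failure of the lemma does not directly yield a limiting measure on $\leq N_m-1$ atoms; you need a greedy selection step --- pick $y_1,\dots,y_{N_m-1}$ maximizing $\sigma(B(\cdot,r))$ subject to $2r$-separation, observe that if the $N_m$-th maximum is below $\gamma$ then every $r$-ball avoiding $\cup_i B(y_i,3r)$ has mass $<\gamma$, and use a covering-number bound to conclude $\sigma$ places mass $1-O(\gamma/r^2)$ on $\cup_i B(y_i,3r)$. Choosing $\gamma_n\ll r_n^2$ along the contradiction sequence is what makes the limit concentrate on $\leq N_m-1$ atoms and actually contradict the definition of $N_m$.
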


It is worth pointing out that the coefficient $\frac{1}{4\pi N_{m}}%
+\varepsilon $ is almost optimal (see Lemma \ref{lem3.1}). On the other
hand, in view of (\ref{eq1.6}) and (\ref{eq1.8}), it would be very
interesting to determine the best possible constant $c_{\varepsilon }$ in (%
\ref{eq1.16}) for $m\geq 2$.

The condition in (\ref{eq1.15}) is the same as saying the cubature formula
(a more familiar name of cubature formula is quadrature formula)%
\begin{equation}
\frac{1}{4\pi }\int_{\mathbb{S}^{2}}fd\mu \approx \nu _{1}f\left(
x_{1}\right) +\cdots +\nu _{N}f\left( x_{N}\right)   \label{eq1.17}
\end{equation}%
for functions $f$ on $\mathbb{S}^{2}$ has nonnegative weights and degree of
precision $m$ (here we use the terminology in \cite{HSW}). Various cubature
formulas are of great practical importance in scientific computing and have
been extensively studied in the literature (see the review articles \cite%
{Co, HSW} and the references therein). In particular, the size of $N_{m}$ is
discussed in \cite[section 4.6]{HSW}. It follows from \cite[theorem 7.1]{Co}
or \cite[theorem 4]{HSW} that%
\begin{equation}
N_{m}\geq \left( \left[ \frac{m}{2}\right] +1\right) ^{2}.  \label{eq1.18}
\end{equation}%
Here $\left[ t\right] $ denotes the largest integer less than or equal to $t$%
. In our case when all the weights $\nu _{i}$'s are nonnegative, a simple
proof of (\ref{eq1.18}) is given on \cite[p1203]{HSW}. In general, finding
the exact values of $N_{m}$ for all $m$'s is still an open problem.

On the other hand, it is straightforward to see that $N_{1}=2$ (see Example %
\ref{ex4.1}). Hence (\ref{eq1.4}) follows from Theorem \ref{thm1.1}. It is
also well known in numerical analysis community that $N_{2}=4$ (we provide
an elementary proof of this fact in Lemma \ref{lem4.1} for reader's
convenience). As a consequence, we have

\begin{corollary}
\label{cor1.1}Assume $u\in H^{1}\left( \mathbb{S}^{2}\right) $ such that $%
\int_{\mathbb{S}^{2}}ud\mu =0$ and for every $p\in \overset{\circ }{\mathcal{%
P}}_{2}$, $\int_{\mathbb{S}^{2}}pe^{2u}d\mu =0$, then for any $\varepsilon
>0 $, we have%
\begin{equation}
\log \int_{\mathbb{S}^{2}}e^{2u}d\mu \leq \left( \frac{1}{16\pi }%
+\varepsilon \right) \left\Vert \nabla u\right\Vert
_{L^{2}}^{2}+c_{\varepsilon }.  \label{eq1.19}
\end{equation}
\end{corollary}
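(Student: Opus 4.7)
The plan is simply to invoke Theorem \ref{thm1.1} with $m=2$ and then substitute the value $N_{2}=4$. Since the hypotheses of Corollary \ref{cor1.1} are exactly those of Theorem \ref{thm1.1} in the case $m=2$ (namely $u\in H^{1}(\mathbb{S}^{2})$ with $\overline{u}=0$ and $\int_{\mathbb{S}^{2}}pe^{2u}\,d\mu=0$ for all $p\in\overset{\circ}{\mathcal{P}}_{2}$), Theorem \ref{thm1.1} directly yields, for every $\varepsilon>0$,
\begin{equation*}
\log\int_{\mathbb{S}^{2}}e^{2u}\,d\mu\leq\left(\frac{1}{4\pi N_{2}}+\varepsilon\right)\|\nabla u\|_{L^{2}}^{2}+c_{\varepsilon}.
\end{equation*}
Thus the entire content of the corollary reduces to identifying $N_{2}$.

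The remaining step is therefore to record $N_{2}=4$. Here I would appeal to the identification of $\mathcal{N}_{m}$ with the set of node counts of positive cubature formulas of degree $m$ on $\mathbb{S}^{2}$, as explained after the statement of Theorem \ref{thm1.1}. The lower bound from \cite[Theorem 4]{HSW} (see (\ref{eq1.18})) already gives $N_{2}\geq([2/2]+1)^{2}=4$, so the task is to exhibit a positive-weight cubature formula with $4$ nodes that is exact on $\mathcal{P}_{2}$. The natural candidate is the vertex set of a regular tetrahedron inscribed in $\mathbb{S}^{2}$ with equal weights $\nu_{i}=1/4$: by symmetry the formula is exact on $\mathcal{H}_{1}$, and a direct computation on the quadratic harmonics in $\mathcal{H}_{2}$ (using that the sum over the four vertices of any traceless symmetric bilinear form vanishes, again by tetrahedral symmetry) shows exactness on $\mathcal{H}_{2}$. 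Combined with the decomposition (\ref{eq1.14}) this gives $4\in\mathcal{N}_{2}$, and together with the lower bound we get $N_{2}=4$. This is exactly what is worked out in Lemma \ref{lem4.1}.

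Substituting $N_{2}=4$ into the inequality above yields $\frac{1}{4\pi N_{2}}=\frac{1}{16\pi}$, which is precisely (\ref{eq1.19}), completing the proof. The only nontrivial step is the evaluation $N_{2}=4$; the matching upper bound (the existence of the tetrahedral cubature) is the one genuine computation, but it is routine once one knows to test against the tetrahedral vertices.
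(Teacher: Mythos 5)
Your proposal is correct and matches the paper's own route: Corollary \ref{cor1.1} is obtained exactly by specializing Theorem \ref{thm1.1} to $m=2$ and substituting $N_{2}=4$, which the paper establishes in Lemma \ref{lem4.1} via the same tetrahedral cubature with weights $\nu_{i}=\tfrac14$. The only cosmetic difference is that you take the lower bound $N_{2}\geq 4$ from (\ref{eq1.18}), whereas Lemma \ref{lem4.1} also gives a direct elementary argument ruling out $N=2,3$ by testing against $y_{1}^{2}$ and $y_{3}^{2}$; the paper itself notes both justifications.
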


At last we want to point out that our analysis of $H^{1}$ on surfaces
depends heavily on the Hilbert space structure of $H^{1}$, and closely
follows \cite[p197]{L}. For similar discussion of $W^{1,n}$ ($n\geq 3$) on a
Riemannian manifold of dimension $n$, \cite[p197]{L} has to use special
symmetrization process to gain the pointwise convergence of the gradient of
functions considered. In \cite{H}, by adapting the approach in this paper,
we are able to avoid the symmetrization process and generalize the analysis
to dimensions at least $3$ as well as higher order Sobolev spaces. We also
remark that in a forthcoming paper \cite{ChG} we discuss an inequality on $%
\mathbb{S}^{2}$ which is the counterpart of the second inequality in the
Szego limit theorem of the Toeplitz determinants on the unit circle.

In Section \ref{sec2}, we will derive some extensions of the concentration
compactness principle in dimension $2$. These refinements will be used in
Section \ref{sec3} to prove our main theorem. In Section \ref{sec4}, we
discuss some elementary facts about $N_{m}$. In particular we will show $%
N_{2}=4$. In Section \ref{sec5}, we will make a first effort toward related
sharp inequalities generalizing (\ref{eq1.6}). In Section \ref{sec6}, we
will show our approach gives a new way to prove the sequence of
Lebedev-Milin type inequalities on the unit circle.

\section{Refinements of concentration compactness principle in dimension $2$%
\label{sec2}}

In this section, we will extend the concentration compactness principle in
dimension $2$ developed in \cite[section I.7]{L}. These extensions will be
crucial in the derivation of Theorem \ref{thm1.1}.

We start from a basic consequence of Moser-Trudinger inequality (\ref{eq1.2}%
).

\begin{lemma}
\label{lem2.1}For any $u\in H^{1}\left( M\right) $ and $a>0$, we have%
\begin{equation}
\int_{M}e^{au^{2}}d\mu <\infty .  \label{eq2.1}
\end{equation}
\end{lemma}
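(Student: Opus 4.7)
The plan is to reduce the arbitrary-$a$ statement to the Moser-Trudinger inequality (\ref{eq1.2}), whose exponent constant $4\pi/\|\nabla u\|_{L^2}^2$ is only useful when $\|\nabla u\|_{L^2}$ is small. The idea is therefore to split $u$ into a smooth bounded piece plus a small $H^1$-perturbation, for which the Moser-Trudinger bound becomes applicable at the prescribed exponent $a$.

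Fix $a>0$ and $u\in H^1(M)$. Using density of $C^\infty(M)$ in $H^1(M)$, choose $v\in C^\infty(M)$ with $\|\nabla(u-v)\|_{L^2(M)}^2\leq \pi/a$, and let $w:=u-v$. From $u^2\leq 2v^2+2w^2$ one gets
\begin{equation*}
\int_M e^{au^2}\,d\mu\leq \Bigl(\sup_M e^{2av^2}\Bigr)\int_M e^{2aw^2}\,d\mu,
\end{equation*}
so it suffices to bound $\int_M e^{2aw^2}\,d\mu$. Next write $w=\overline{w}+(w-\overline{w})$. Then $w^2\leq 2\overline{w}^2+2(w-\overline{w})^2$, and since $\overline{w}$ is a constant,
\begin{equation*}
\int_M e^{2aw^2}\,d\mu\leq e^{4a\overline{w}^2}\int_M e^{4a(w-\overline{w})^2}\,d\mu.
\end{equation*}

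Now the function $w-\overline{w}$ lies in $H^1(M)$, has mean zero, and satisfies $\|\nabla(w-\overline{w})\|_{L^2}^2=\|\nabla w\|_{L^2}^2\leq \pi/a$, so
\begin{equation*}
4a\leq \frac{4\pi}{\|\nabla(w-\overline{w})\|_{L^2}^2}.
\end{equation*}
Hence $e^{4a(w-\overline{w})^2}\leq \exp\!\bigl(4\pi(w-\overline{w})^2/\|\nabla(w-\overline{w})\|_{L^2}^2\bigr)$ pointwise, and the Moser-Trudinger inequality (\ref{eq1.2}) applied to $w-\overline{w}$ (assuming $w-\overline{w}\not\equiv 0$; otherwise the bound is trivial) gives
\begin{equation*}
\int_M e^{4a(w-\overline{w})^2}\,d\mu\leq c(M,g)<\infty.
\end{equation*}
Combining the displayed inequalities yields $\int_M e^{au^2}\,d\mu<\infty$.

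There is no real obstacle here beyond the bookkeeping of the three elementary inequalities $(\alpha+\beta)^2\leq 2\alpha^2+2\beta^2$; the only conceptual point is the density argument that allows one to arrange $\|\nabla w\|_{L^2}^2\leq \pi/a$, which is exactly what calibrates the Moser-Trudinger exponent to the desired $a$. Everything else is reduction to (\ref{eq1.2}).
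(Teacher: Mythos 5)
Your proof is correct, but it reaches the key decomposition by a different route than the paper. The paper truncates: it sets $v=(u-b)^{+}$ and uses absolute continuity of the integral to conclude $\|\nabla v\|_{L^{2}}^{2}=\int_{\{u>b\}}|\nabla u|^{2}\,d\mu\to 0$ as $b\to\infty$, then bounds $u\leq v+b=(v-\overline{v})+(\overline{v}+b)$ and applies Moser--Trudinger to $v-\overline{v}$ once $b$ is large enough that $2a\leq 4\pi/\|\nabla v\|_{L^{2}}^{2}$. You instead invoke density of $C^{\infty}(M)$ in $H^{1}(M)$ to write $u=v+w$ with $v$ smooth (hence bounded on the compact $M$) and $\|\nabla w\|_{L^{2}}^{2}\leq\pi/a$, then treat $w-\overline{w}$ the same way. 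Both arguments implement the same idea --- peel off a bounded part, leaving a remainder whose gradient norm is small enough that the Moser--Trudinger exponent $4\pi/\|\nabla\cdot\|_{L^{2}}^{2}$ dominates the target exponent --- and both are complete; you correctly dispose of the degenerate case $w-\overline{w}\equiv 0$, and your density step is the analogue of the paper's absolute-continuity step. The truncation route is slightly more self-contained (it avoids citing the density theorem, which in turn is usually proved by mollification/truncation), while yours avoids the preliminary reduction to $u$ nonnegative and unbounded; neither has an advantage in substance.
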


\begin{proof}
Without losing of generality, we can assume $u$ is nonnegative and
unbounded. For $b>0$, let $v=\left( u-b\right) ^{+}$, then%
\begin{equation*}
\left\Vert \nabla v\right\Vert _{L^{2}}^{2}=\int_{u>b}\left\vert \nabla
u\right\vert ^{2}d\mu \rightarrow 0
\end{equation*}%
as $b\rightarrow \infty $. Let $w=v-\overline{v}$, then%
\begin{equation*}
0\leq u\leq v+b=w+\overline{v}+b.
\end{equation*}%
Hence%
\begin{equation*}
u^{2}\leq 2w^{2}+2\left( \overline{v}+b\right) ^{2}.
\end{equation*}%
We have%
\begin{equation*}
e^{au^{2}}\leq e^{2a\left( \overline{v}+b\right) ^{2}}e^{2aw^{2}}\leq
e^{2a\left( \overline{v}+b\right) ^{2}}e^{4\pi \frac{w^{2}}{\left\Vert
\nabla w\right\Vert _{L^{2}}^{2}}}
\end{equation*}%
when $b$ is large enough. It follows that%
\begin{equation*}
\int_{M}e^{au^{2}}d\mu \leq ce^{2a\left( \overline{v}+b\right) ^{2}}<\infty .
\end{equation*}
\end{proof}

Next we prove a localized version of \cite[Theorem I.6]{L}.

\begin{lemma}
\label{lem2.2}Assume $u_{i}\in H^{1}\left( M\right) $ such that $\overline{%
u_{i}}=0$ and $\left\Vert \nabla u_{i}\right\Vert _{L^{2}}\leq 1$. We also
assume $u_{i}\rightharpoonup u$ weakly in $H^{1}\left( M\right) $, $%
u_{i}\rightarrow u$ a.e. and%
\begin{equation}
\left\vert \nabla u_{i}\right\vert ^{2}d\mu \rightarrow \left\vert \nabla
u\right\vert ^{2}d\mu +\sigma  \label{eq2.2}
\end{equation}%
in measure. If $K\subset M$ is a compact subset with $\sigma \left( K\right)
<1$, then for any $1\leq p<\frac{1}{\sigma \left( K\right) }$, we have $%
e^{4\pi u_{i}^{2}}$ is bounded in $L^{p}\left( K\right) $ i.e.%
\begin{equation}
\sup_{i}\int_{K}e^{4\pi pu_{i}^{2}}d\mu <\infty .  \label{eq2.3}
\end{equation}
\end{lemma}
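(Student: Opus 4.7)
The plan is to reduce to the standard Moser--Trudinger inequality (\ref{eq1.2}) by localizing near each point of $K$, after splitting $u_i$ into its weak limit $u$ and a remainder $v_i := u_i - u$ whose Dirichlet energy concentrates exactly according to $\sigma$. By Rellich, $v_i \rightharpoonup 0$ in $H^1(M)$ with $v_i \to 0$ strongly in $L^2(M)$ and a.e., and expanding $|\nabla u_i|^2 = |\nabla u|^2 + 2\nabla u\cdot \nabla v_i + |\nabla v_i|^2$, the cross term tends to $0$ weakly against any $C(M)$ test function, so (\ref{eq2.2}) gives $|\nabla v_i|^2\,d\mu \to \sigma$ weakly as measures.

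Next I would choose a good finite cover of $K$. Because $p < 1/\sigma(K)$ and $\sigma(\{x_0\})\le \sigma(K) < 1/p$ for every $x_0\in K$, continuity of $\sigma$ from above gives, for each $x_0 \in K$, a radius $r(x_0)>0$ with $\sigma(\overline{B_{2r(x_0)}(x_0)}) < 1/p$; by compactness finitely many such balls $B_{r_j}(x_j)$ ($j=1,\ldots,J$) cover $K$. Pick $\eta_j \in C_c^\infty(B_{2r_j}(x_j))$ with $\eta_j \equiv 1$ on $B_{r_j}(x_j)$ and $0 \le \eta_j \le 1$. Then I would estimate
\[
\|\nabla(\eta_j v_i)\|_{L^2}^2 = \int \eta_j^2|\nabla v_i|^2\,d\mu + 2\int \eta_j v_i\,\nabla\eta_j\cdot\nabla v_i\,d\mu + \int v_i^2|\nabla\eta_j|^2\,d\mu.
\]
The last two terms tend to $0$ because $v_i \to 0$ in $L^2$ and $\nabla v_i$ is bounded in $L^2$, while the first term converges to $\int \eta_j^2\,d\sigma \le \sigma(\overline{B_{2r_j}(x_j)}) < 1/p$. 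So $\limsup_i \|\nabla(\eta_j v_i)\|_{L^2}^2 < 1/p$, and for $i$ large we may fix $\alpha_j < 1/p$ with $\|\nabla(\eta_j v_i)\|_{L^2}^2 \le \alpha_j$.

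Applying the Moser--Trudinger inequality (\ref{eq1.2}) to the mean-zero function $w_{i,j} := \eta_j v_i - \overline{\eta_j v_i}$ then yields
\[
\int_M \exp\!\left(\frac{4\pi w_{i,j}^2}{\alpha_j}\right)d\mu \le c(M,g),
\]
and since $4\pi p\alpha_j < 4\pi$, this bounds $\int \exp(4\pi p w_{i,j}^2)\,d\mu$ uniformly; the additive constant $\overline{\eta_j v_i}\to 0$ is harmless. To pass from $v_i$ to $u_i$, for any $\varepsilon>0$ write $(u_i)^2 = (u+v_i)^2 \le (1+\varepsilon)v_i^2 + C_\varepsilon u^2$ on $B_{r_j}(x_j)$, choose $\varepsilon$ small enough that $p(1+\varepsilon)\alpha_j$ is still bounded away from $1$, and apply H\"older: one factor is controlled by the preceding Moser--Trudinger bound (with a slightly larger exponent), the other by Lemma \ref{lem2.1} applied to $u \in H^1(M)$. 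Summing over $j$ covers $K$ and gives (\ref{eq2.3}).

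The main subtle point is keeping the exponent strictly below the Moser--Trudinger threshold throughout the splittings: first when enlarging the ball from $B_{r_j}$ to $B_{2r_j}$ to accommodate the cutoff (which forces the choice $\sigma(\overline{B_{2r_j}(x_j)}) < 1/p$ rather than merely $< 1/p$ at $x_j$), and second when trading $u_i^2$ for $(1+\varepsilon)v_i^2$ in order to absorb the weak limit $u$ via Lemma \ref{lem2.1}. Both adjustments are permissible precisely because we have the strict inequality $p\sigma(K) < 1$ in the hypothesis, giving the slack needed to insert $(1+\varepsilon)$ factors.
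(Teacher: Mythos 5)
Your proof is correct, and the engine driving it is the same as the paper's: split $u_{i}=u+v_{i}$ with $v_{i}\rightharpoonup 0$ in $H^{1}$ and $v_{i}\rightarrow 0$ in $L^{2}$, verify $\left\vert \nabla v_{i}\right\vert ^{2}d\mu \rightarrow \sigma $ weakly, localize by a smooth cutoff $\eta $ so that $\left\Vert \nabla \left( \eta v_{i}\right) \right\Vert _{L^{2}}^{2}\rightarrow \int \eta ^{2}d\sigma <\frac{1}{p}$, apply the Moser--Trudinger inequality (\ref{eq1.2}) to the mean-normalized cutoff, and then recover $u_{i}$ from $v_{i}$ via $\left( a+b\right) ^{2}\leq \left( 1+\varepsilon \right) a^{2}+C_{\varepsilon }b^{2}$, Lemma~\ref{lem2.1} and H\"{o}lder, with the vanishing constant $\overline{\eta v_{i}}\rightarrow 0$ absorbed harmlessly.

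The one structural difference worth flagging is how you arrange the localization. The paper fixes $p_{1}\in \left( p,\frac{1}{\sigma \left( K\right) }\right) $ and, using $\sigma \left( K\right) <\frac{1}{p_{1}}$ together with outer regularity of $\sigma $, produces a \emph{single} cutoff $\varphi $ with $\left. \varphi \right\vert _{K}=1$ and $\int_{M}\varphi ^{2}d\sigma <\frac{1}{p_{1}}$; one application of Moser--Trudinger then controls all of $K$ at once. You instead cover $K$ by finitely many balls $B_{r_{j}}\left( x_{j}\right) $ chosen so that $\sigma \left( \overline{B_{2r_{j}}\left( x_{j}\right) }\right) <\frac{1}{p}$, run the cutoff argument on each, and sum. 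Note that your covering step uses only the consequence $\sigma \left( \left\{ x\right\} \right) <\frac{1}{p}$ for $x\in K$, which is strictly weaker than $\sigma \left( K\right) <\frac{1}{p}$; in effect you are proving part (1) of Corollary~\ref{cor2.1} (restricted to $K$) directly, which the paper instead obtains afterward by tiling $M$ with small balls and invoking Lemma~\ref{lem2.2} on each. Both routes are sound; the paper's single-cutoff choice is the more economical way to prove the lemma as stated, while your version front-loads the ball cover that the paper postpones to the corollary. One small cosmetic imprecision in your write-up: on $B_{r_{j}}$ you have $v_{i}=w_{i,j}+\overline{\eta _{j}v_{i}}$, so the split $u_{i}^{2}\leq \left( 1+\varepsilon \right) v_{i}^{2}+C_{\varepsilon }u^{2}$ should really be applied to $u_{i}=w_{i,j}+\left( u+\overline{\eta _{j}v_{i}}\right) $, exactly as the paper writes $u_{i}=\left( v_{i}-\overline{\varphi v_{i}}\right) +u+\overline{\varphi v_{i}}$; you acknowledge that the constant is harmless, so the substance is right, but the two displayed lines as written do not quite fit together.
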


\begin{proof}
For basics about measure theory we refer the readers to \cite{EG}. Let $%
v_{i}=u_{i}-u$, then $v_{i}\rightharpoonup 0$ weakly in $H^{1}\left(
M\right) $, $v_{i}\rightarrow 0$ in $L^{2}\left( M\right) $. For any $%
\varphi \in C^{\infty }\left( M\right) $, we have%
\begin{eqnarray*}
&&\left\Vert \nabla \left( \varphi v_{i}\right) \right\Vert _{L^{2}}^{2} \\
&=&\int_{M}\left( \left\vert \nabla \varphi \right\vert
^{2}v_{i}^{2}+2\varphi v_{i}\nabla \varphi \cdot \nabla v_{i}+\varphi
^{2}\left\vert \nabla v_{i}\right\vert ^{2}\right) d\mu \\
&=&\int_{M}\left\vert \nabla \varphi \right\vert ^{2}v_{i}^{2}d\mu
+2\int_{M}\varphi v_{i}\nabla \varphi \cdot \nabla v_{i}d\mu \\
&&+\int_{M}\left( \varphi ^{2}\left\vert \nabla u_{i}\right\vert
^{2}-2\varphi ^{2}\nabla u\cdot \nabla u_{i}+\varphi ^{2}\left\vert \nabla
u\right\vert ^{2}\right) d\mu \\
&\rightarrow &\int_{M}\varphi ^{2}d\sigma
\end{eqnarray*}%
as $i\rightarrow \infty $. Assume $1\leq p_{1}<\frac{1}{\sigma \left(
K\right) }$, then $\sigma \left( K\right) <\frac{1}{p_{1}}$. Hence there
exists $\varphi \in C^{\infty }\left( M\right) $ such that $\left. \varphi
\right\vert _{K}=1$ and $\int_{M}\varphi ^{2}d\sigma <\frac{1}{p_{1}}$. It
follows that for $i$ large enough,%
\begin{equation*}
\left\Vert \nabla \left( \varphi v_{i}\right) \right\Vert _{L^{2}}^{2}<\frac{%
1}{p_{1}}.
\end{equation*}%
Hence%
\begin{eqnarray*}
\int_{K}e^{4\pi p_{1}\left( v_{i}-\overline{\varphi v_{i}}\right) ^{2}}d\mu
&\leq &\int_{M}e^{4\pi p_{1}\left( \varphi v_{i}-\overline{\varphi v_{i}}%
\right) ^{2}}d\mu \\
&\leq &\int_{M}e^{4\pi \frac{\left( \varphi v_{i}-\overline{\varphi v_{i}}%
\right) ^{2}}{\left\Vert \nabla \left( \varphi v_{i}\right) \right\Vert
_{L^{2}}^{2}}}d\mu \\
&\leq &c\left( M,g\right) .
\end{eqnarray*}%
To continue, we observe that for any $\varepsilon >0$,%
\begin{eqnarray*}
u_{i}^{2} &=&\left( \left( v_{i}-\overline{\varphi v_{i}}\right) +u+%
\overline{\varphi v_{i}}\right) ^{2} \\
&=&\left( v_{i}-\overline{\varphi v_{i}}\right) ^{2}+2\left( v_{i}-\overline{%
\varphi v_{i}}\right) \left( u+\overline{\varphi v_{i}}\right) +\left( u+%
\overline{\varphi v_{i}}\right) ^{2} \\
&\leq &\left( 1+\varepsilon \right) \left( v_{i}-\overline{\varphi v_{i}}%
\right) ^{2}+\left( 1+\varepsilon ^{-1}\right) \left( u+\overline{\varphi
v_{i}}\right) ^{2} \\
&\leq &\left( 1+\varepsilon \right) \left( v_{i}-\overline{\varphi v_{i}}%
\right) ^{2}+2\left( 1+\varepsilon ^{-1}\right) u^{2}+2\left( 1+\varepsilon
^{-1}\right) \overline{\varphi v_{i}}^{2}.
\end{eqnarray*}%
Hence%
\begin{equation*}
e^{4\pi u_{i}^{2}}\leq e^{4\pi \left( 1+\varepsilon \right) \left( v_{i}-%
\overline{\varphi v_{i}}\right) ^{2}}e^{8\pi \left( 1+\varepsilon
^{-1}\right) u^{2}}e^{8\pi \left( 1+\varepsilon ^{-1}\right) \overline{%
\varphi v_{i}}^{2}}.
\end{equation*}%
Given $1\leq p<\frac{1}{\sigma \left( K\right) }$, we can choose a $p_{1}\in
\left( p,\frac{1}{\sigma \left( K\right) }\right) $. There exists a $%
\varepsilon >0$ such that $\frac{p_{1}}{1+\varepsilon }>p$. Note that $%
e^{4\pi \left( 1+\varepsilon \right) \left( v_{i}-\overline{\varphi v_{i}}%
\right) ^{2}}$ is bounded in $L^{\frac{p_{1}}{1+\varepsilon }}\left(
K\right) $, $e^{8\pi \left( 1+\varepsilon ^{-1}\right) u^{2}}\in L^{q}\left(
K\right) $ for any $q<\infty $ (by Lemma \ref{lem2.1}) and $e^{8\pi \left(
1+\varepsilon ^{-1}\right) \overline{\varphi v_{i}}^{2}}\rightarrow 1$ as $%
i\rightarrow \infty $, it follows from Holder's inequality that $e^{4\pi
u_{i}^{2}}$ is bounded in $L^{p}\left( K\right) $.
\end{proof}

\begin{corollary}
\label{cor2.1}With the same assumption as in Lemma \ref{lem2.2}, let%
\begin{equation}
\kappa =\max_{x\in M}\sigma \left( \left\{ x\right\} \right) \leq 1.
\end{equation}

\begin{enumerate}
\item If $\kappa <1$, then for any $1\leq p<\frac{1}{\kappa }$, $e^{4\pi
u_{i}^{2}}$ is bounded in $L^{p}\left( M\right) $. In particular, $e^{4\pi
u_{i}^{2}}\rightarrow e^{4\pi u^{2}}$ in $L^{1}\left( M\right) $.

\item If $\kappa =1$, then $\sigma =\delta _{x_{0}}$ for some $x_{0}\in M$, $%
u=0$ and after passing to a subsequence,%
\begin{equation}
e^{4\pi u_{i}^{2}}\rightarrow 1+c_{0}\delta _{x_{0}}
\end{equation}%
in measure for some $c_{0}\geq 0$.
\end{enumerate}
\end{corollary}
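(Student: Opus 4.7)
My plan is to reduce both parts to Lemma \ref{lem2.2} by producing suitable compact coverings whose $\sigma$-measure we can control, and then to package the outputs with weak-$*$ compactness of positive Radon measures.

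For part (1), fix $p\in [1,1/\kappa)$. Since $\sigma$ is a finite Borel measure on the compact metric space $M$ and $\overline{B(x,r)}\downarrow \{x\}$ as $r\downarrow 0$, one has $\sigma(\overline{B(x,r)})\to \sigma(\{x\})\le \kappa <1/p$. So at each $x\in M$ I pick $r_x>0$ with $\sigma(\overline{B(x,r_x)})<1/p$; by compactness of $M$, finitely many open balls $B(x_j,r_{x_j})$ cover $M$, and Lemma \ref{lem2.2} applied to each compact set $K_j=\overline{B(x_j,r_{x_j})}$ gives a uniform $L^p(K_j)$ bound on $e^{4\pi u_i^2}$. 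Summing these bounds gives the global $L^p(M)$ bound. For the $L^1$ convergence, the a.e.\ convergence $u_i\to u$ yields $e^{4\pi u_i^2}\to e^{4\pi u^2}$ a.e., and a uniform $L^p$ bound for some $p>1$ gives uniform integrability, so Vitali's convergence theorem concludes.

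For part (2), I would first test the weak convergence (\ref{eq2.2}) against the constant function $1\in C(M)$ to obtain $\sigma(M)+\|\nabla u\|_{L^2}^2\le \liminf\|\nabla u_i\|_{L^2}^2\le 1$. Since $\kappa=1$ there exists $x_0\in M$ with $\sigma(\{x_0\})=1$; this forces $\sigma(M)=1$, $\sigma=\delta_{x_0}$, and $\|\nabla u\|_{L^2}=0$. So $u$ is constant, and the Rellich compact embedding $H^1(M)\hookrightarrow L^2(M)$ together with $\overline{u_i}=0$ gives $u=0$. For the limit of $e^{4\pi u_i^2}d\mu$, the Moser-Trudinger inequality (\ref{eq1.2}) produces a uniform bound on $\int_M e^{4\pi u_i^2}d\mu$ (using $\|\nabla u_i\|_{L^2}\le 1$ and $\overline{u_i}=0$), so after passing to a subsequence these measures converge weakly-$*$ to a finite positive Borel measure $\nu$ on $M$. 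For any compact $K\subset M\setminus\{x_0\}$, one has $\sigma(K)=0$, so Lemma \ref{lem2.2} yields $L^p(K)$ bounds for all $p\ge 1$, and the a.e.\ convergence $u_i\to 0$ combined with Vitali gives $e^{4\pi u_i^2}\to 1$ in $L^1(K)$. Therefore $\nu=\mu$ on $M\setminus\{x_0\}$, and since $\mu(\{x_0\})=0$, we get $\nu=\mu+c_0\delta_{x_0}$ with $c_0:=\nu(\{x_0\})\ge 0$.

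The main technical point is the covering construction in part (1): one must exploit the outer regularity of the (possibly atomic) measure $\sigma$ together with the strict inequality $\kappa<1/p$ to build a finite cover by compact sets each of $\sigma$-measure strictly less than $1/p$, so that Lemma \ref{lem2.2} applies with the prescribed exponent $p$ on every piece. Once this is in place, the rest of the argument is a clean packaging of Lemma \ref{lem2.2}, the global Moser-Trudinger bound, Rellich compactness, and Vitali's theorem.
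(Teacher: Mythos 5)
Your proposal is correct and follows essentially the same approach as the paper: cover $M$ by finitely many closed balls of $\sigma$-measure less than $1/p$ (using continuity of $\sigma$ from above at each point) and apply Lemma~\ref{lem2.2} on each piece, then in the $\kappa=1$ case pair the total-mass inequality $\|\nabla u\|_{L^2}^2+\sigma(M)\le 1$ with $\overline{u}=0$ to force $u=0$ and $\sigma=\delta_{x_0}$, and use the local $L^q$ bounds away from $x_0$ together with weak-$*$ compactness to identify the limit measure. The only cosmetic differences are that you make the Vitali/uniform-integrability step and the Moser--Trudinger total-mass bound explicit, which the paper leaves implicit.
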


\begin{proof}
First we assume $\kappa <1$. Let $1\leq p<\frac{1}{\kappa }$, then for any $%
x\in M$, $\sigma \left( x\right) <\frac{1}{p}$. Hence for some $r_{x}>0$
small, we have $\sigma \left( \overline{B_{r_{x}}\left( x\right) }\right) <%
\frac{1}{p}$. By the compactness of $M$, we see%
\begin{equation*}
M=\dbigcup\limits_{i=1}^{N}B_{r_{i}}\left( x_{i}\right) .
\end{equation*}%
Here $r_{i}=r_{x_{i}}$. Then%
\begin{equation*}
M=\dbigcup\limits_{i=1}^{N}\overline{B_{r_{i}}\left( x_{i}\right) }.
\end{equation*}%
It follows from the Lemma \ref{lem2.2} that%
\begin{equation*}
\sup_{j}\int_{\overline{B_{r_{i}}\left( x_{i}\right) }}e^{4\pi
pu_{j}^{2}}d\mu <\infty .
\end{equation*}%
Summing up, we get%
\begin{equation*}
\sup_{j}\int_{M}e^{4\pi pu_{j}^{2}}d\mu <\infty .
\end{equation*}%
Next we assume $\kappa =1$. Since%
\begin{equation*}
\int_{M}\left\vert \nabla u\right\vert ^{2}d\mu +\sigma \left( M\right) \leq
1,
\end{equation*}%
and $\overline{u}=0$, we see $u=0$ and $\sigma =\delta _{x_{0}}$ for some $%
x_{0}\in M$. For $r>0$ small, we know $e^{4\pi u_{i}^{2}}$ is bounded in $%
L^{q}\left( M\backslash B_{r}\left( x_{0}\right) \right) $ for any $q<\infty 
$, hence $e^{4\pi u_{i}^{2}}\rightarrow 1$ in $L^{1}\left( M\backslash
B_{r}\left( x_{0}\right) \right) $. It follows that after passing to a
subsequence, $e^{4\pi u_{i}^{2}}\rightarrow 1+c_{0}\delta _{x_{0}}$ in
measure for some $c_{0}\geq 0$.
\end{proof}

Now we are ready to derive the main refinement of the earlier concentration
compactness principle.

\begin{proposition}
\label{prop2.1}Assume $\alpha >0$, $m_{i}>0$, $m_{i}\rightarrow \infty $, $%
u_{i}\in H^{1}\left( M\right) $ such that $\overline{u_{i}}=0$, $\left\Vert
\nabla u_{i}\right\Vert _{L^{2}}=1$ and%
\begin{equation}
\log \int_{M}e^{2m_{i}u_{i}}d\mu \geq \alpha m_{i}^{2}.
\end{equation}%
We also assume $u_{i}\rightharpoonup u$ weakly in $H^{1}\left( M\right) $, $%
\left\vert \nabla u_{i}\right\vert ^{2}d\mu \rightarrow \left\vert \nabla
u\right\vert ^{2}d\mu +\sigma $ in measure and%
\begin{equation}
\frac{e^{2m_{i}u_{i}}}{\int_{M}e^{2m_{i}u_{i}}d\mu }\rightarrow \nu
\end{equation}%
in measure. Let%
\begin{equation}
\left\{ x\in M:\sigma \left( x\right) \geq 4\pi \alpha \right\} =\left\{
x_{1},\cdots ,x_{N}\right\} ,
\end{equation}%
then%
\begin{equation}
\nu =\sum_{i=1}^{N}\nu _{i}\delta _{x_{i}},
\end{equation}%
here $\nu _{i}\geq 0$ and $\sum_{i=1}^{N}\nu _{i}=1$.
\end{proposition}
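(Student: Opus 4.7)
The plan is to show that the limit measure $\nu$ is concentrated on the heavy-atom set $\{x_1,\dots,x_N\}$. Once this support statement is in hand, the representation $\nu=\sum_{j=1}^{N}\nu_{j}\delta _{x_{j}}$ is automatic, $\nu_j\geq 0$ is immediate, and $\sum\nu_j=\nu(M)=1$ because each pre-limit measure $\frac{e^{2m_{i}u_{i}}}{\int_{M}e^{2m_{i}u_{i}}\,d\mu}\,d\mu$ is a probability measure and a limit of probability measures on the compact surface $M$ is again a probability measure.

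To prove the support statement, I would fix $x_0\in M\setminus\{x_1,\dots,x_N\}$ and show that $\nu$ has no mass on a small ball around $x_0$. Since $\sigma(\{x_0\})<4\pi\alpha$ and $\sigma$ is a finite Borel measure, outer regularity produces a closed ball $\overline{B}=\overline{B_r(x_0)}$ with $\sigma(\overline{B})<4\pi\alpha$. The key pointwise estimate is Young's inequality $2m_{i}u_{i}\leq \varepsilon m_{i}^{2}+u_{i}^{2}/\varepsilon$, which upon exponentiating and integrating gives
\begin{equation*}
\int_{\overline{B}}e^{2m_{i}u_{i}}\,d\mu \leq e^{\varepsilon m_{i}^{2}}\int_{\overline{B}}e^{4\pi pu_{i}^{2}}\,d\mu ,\qquad p:=\frac{1}{4\pi \varepsilon }.
\end{equation*}

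I would then choose $\varepsilon$ in the open interval $\bigl(\sigma(\overline{B})/(4\pi),\,\alpha\bigr)$, which is non-empty by construction. Two preparatory observations ensure that Lemma~\ref{lem2.2} can be applied to bound the second integral: first, the classical Moser-Trudinger-Onofri inequality~(\ref{eq1.3}) applied to the zero-mean function $m_{i}u_{i}$ forces $\alpha\leq 1/(4\pi)$ in the limit $m_i\to\infty$, so $\varepsilon<1/(4\pi)$ and hence $p>1$; second, $\sigma(\overline{B})<4\pi\varepsilon$ is equivalent to $p<1/\sigma(\overline{B})$. Thus Lemma~\ref{lem2.2} supplies a uniform bound $\sup_i\int_{\overline{B}}e^{4\pi pu_{i}^{2}}\,d\mu<\infty$, and combined with the hypothesis $\int_{M}e^{2m_{i}u_{i}}\,d\mu\geq e^{\alpha m_{i}^{2}}$ this forces
\begin{equation*}
\frac{\int_{\overline{B}}e^{2m_{i}u_{i}}\,d\mu }{\int_{M}e^{2m_{i}u_{i}}\,d\mu }\leq C\,e^{(\varepsilon -\alpha )m_{i}^{2}}\longrightarrow 0
\end{equation*}
since $\varepsilon<\alpha$. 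Passing to the limit yields $\nu(B_r(x_0))=0$, which as $x_0$ varies gives the desired support statement.

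The only delicate step is the simultaneous calibration of $\varepsilon$: one needs $\sigma(\overline{B})<4\pi\varepsilon$, $\varepsilon<\alpha$, and $\varepsilon\leq 1/(4\pi)$ to hold at once. These three constraints are exactly compatible, and this is precisely why the heavy-atom threshold is $4\pi\alpha$ rather than some other constant---the threshold is chosen so that the interval of admissible $\varepsilon$ becomes non-empty exactly when $\sigma(\overline{B})$ falls below it.
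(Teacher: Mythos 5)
Your proof is correct and follows essentially the same strategy as the paper: use Young's inequality to bound $e^{2m_iu_i}$ locally by $e^{4\pi p u_i^2}$, invoke Lemma~\ref{lem2.2} on a small closed ball where $\sigma$ has mass below $4\pi\alpha$, and exploit the hypothesis $\log\int_M e^{2m_iu_i}\,d\mu\geq\alpha m_i^2$ to kill the ratio. Your reparametrization via $\varepsilon$ in place of $p$ and the explicit observation that $\alpha\leq 1/(4\pi)$ (hence $p\geq 1$, as Lemma~\ref{lem2.2} requires) are cosmetic refinements of the paper's argument, not a different route.
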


\begin{proof}
First we claim that if $K$ is a compact subset of $M$ with $\sigma \left(
K\right) <4\pi \alpha $, then $\nu \left( K\right) =0$. Indeed, we can find
another compact set $K_{1}$ such that $K\subset \limfunc{int}K_{1}$ and $%
\sigma \left( K_{1}\right) <4\pi \alpha $. Fix a number $p$ such that%
\begin{equation*}
\frac{1}{4\pi \alpha }<p<\frac{1}{\sigma \left( K_{1}\right) },
\end{equation*}%
then Lemma \ref{lem2.2} tells us%
\begin{equation*}
\int_{K_{1}}e^{4\pi pu_{i}^{2}}d\mu \leq c,
\end{equation*}%
here $c$ is a constant independent of $i$. Using%
\begin{equation*}
2m_{i}u_{i}\leq 4\pi pu_{i}^{2}+\frac{m_{i}^{2}}{4\pi p},
\end{equation*}%
we see%
\begin{equation*}
\int_{K_{1}}e^{2m_{i}u_{i}}d\mu \leq ce^{\frac{m_{i}^{2}}{4\pi p}}.
\end{equation*}%
It follows that%
\begin{equation*}
\frac{\int_{K_{1}}e^{2m_{i}u_{i}}d\mu }{\int_{M}e^{2m_{i}u_{i}}d\mu }\leq
ce^{\left( \frac{1}{4\pi p}-\alpha \right) m_{i}^{2}}.
\end{equation*}%
Hence%
\begin{equation*}
\nu \left( K\right) \leq \nu \left( \limfunc{int}K_{1}\right) \leq \lim
\inf_{i\rightarrow \infty }\frac{\int_{K_{1}}e^{2m_{i}u_{i}}d\mu }{%
\int_{M}e^{2m_{i}u_{i}}d\mu }=0.
\end{equation*}%
It follows that $\nu \left( K\right) =0$.

If $\sigma \left( x\right) <4\pi \alpha $, then for some $r_{x}>0$ small, we
have $\sigma \left( \overline{B_{r_{x}}\left( x\right) }\right) <4\pi \alpha 
$. It follows from the claim that $\nu \left( \overline{B_{r_{x}}\left(
x\right) }\right) =0$. Hence%
\begin{equation*}
\nu \left( M\backslash \left\{ x_{1},\cdots ,x_{N}\right\} \right) =0.
\end{equation*}%
In another word, $\nu =\sum_{i=1}^{N}\nu _{i}\delta _{x_{i}}$ with $\nu
_{i}\geq 0$ and $\sum_{i=1}^{N}\nu _{i}=1$.
\end{proof}

\section{Proof of Theorem \protect\ref{thm1.1}\label{sec3}}

Let $f_{1},\cdots ,f_{L}\in C\left( M\right) $ and $\alpha >0$ be given.
Here is our strategy to show for any $u\in H^{1}\left( M\right) $ with $%
\overline{u}=0$ and $\int_{M}f_{i}e^{2u}d\mu =0$ for $1\leq i\leq L$, we have%
\begin{equation}
\log \int_{M}e^{2u}d\mu \leq \alpha \left\Vert \nabla u\right\Vert
_{L^{2}}^{2}+c.  \label{eq3.1}
\end{equation}%
This will be proven by contradiction argument. If it is not the case, then
there exists $v_{i}\in H^{1}\left( M\right) $, $\overline{v_{i}}=0$, $%
\int_{M}f_{j}e^{2v_{i}}d\mu =0$ for $1\leq j\leq L$, such that%
\begin{equation}
\log \int_{M}e^{2v_{i}}d\mu -\alpha \left\Vert \nabla v_{i}\right\Vert
_{L^{2}}^{2}\rightarrow \infty  \label{eq3.2}
\end{equation}%
as $i\rightarrow \infty $. Then $\log \int_{M}e^{2v_{i}}d\mu \rightarrow
\infty $. Since%
\begin{equation}
\log \int_{M}e^{2v_{i}}d\mu \leq \frac{1}{4\pi }\left\Vert \nabla
v_{i}\right\Vert _{L^{2}}^{2}+c\left( M,g\right) ,  \label{eq3.3}
\end{equation}%
we see $\left\Vert \nabla v_{i}\right\Vert _{L^{2}}\rightarrow \infty $. Let 
$m_{i}=\left\Vert \nabla v_{i}\right\Vert _{L^{2}}$ and $u_{i}=\frac{v_{i}}{%
m_{i}}$, then $m_{i}\rightarrow \infty $, $\left\Vert \nabla
u_{i}\right\Vert _{L^{2}}=1$, $\overline{u_{i}}=0$. After passing to a
subsequence, we have%
\begin{eqnarray*}
u_{i} &\rightharpoonup &u\text{ weakly in }H^{1}\left( M\right) ; \\
\log \int_{M}e^{2m_{i}u_{i}}d\mu -\alpha m_{i}^{2} &\rightarrow &\infty , \\
\left\vert \nabla u_{i}\right\vert ^{2}d\mu &\rightarrow &\left\vert \nabla
u\right\vert ^{2}d\mu +\sigma \text{ in measure,} \\
\frac{e^{2m_{i}u_{i}}}{\int_{M}e^{2m_{i}u_{i}}d\mu } &\rightarrow &\nu \text{
in measure.}
\end{eqnarray*}

Let%
\begin{equation}
\left\{ x\in M:\sigma \left( x\right) \geq 4\pi \alpha \right\} =\left\{
x_{1},\cdots ,x_{N}\right\} ,  \label{eq3.4}
\end{equation}%
then it follows from Proposition \ref{prop2.1} that%
\begin{equation}
\nu =\sum_{i=1}^{N}\nu _{i}\delta _{x_{i}},  \label{eq3.5}
\end{equation}%
here $\nu _{i}\geq 0$ and $\sum_{i=1}^{N}\nu _{i}=1$. On the other hand we
have%
\begin{equation*}
\int_{M}f_{j}d\nu =0
\end{equation*}%
for $1\leq j\leq L$. In another word, we have%
\begin{eqnarray}
4\pi \alpha N &\leq &1;  \label{eq3.6} \\
\sum_{i=1}^{N}\nu _{i}f_{j}\left( x_{i}\right) &=&0  \label{eq3.7}
\end{eqnarray}%
for $1\leq j\leq L$. We hope to get contradiction from these inequalities.

\begin{proof}[Proof of Theorem \protect\ref{thm1.1}]
Let $\alpha =\frac{1}{4\pi N_{m}}+\varepsilon $. If (\ref{eq1.16}) is not
true, then the above discussion gives us $x_{1},\cdots ,x_{N}\in \mathbb{S}%
^{2}$, $\nu _{1},\cdots ,\nu _{N}\geq 0$ such that $\sum_{i=1}^{N}\nu _{i}=1$
and for any $p\in \overset{\circ }{\mathcal{P}}_{m}$, $\nu _{1}p\left(
x_{1}\right) +\cdots +\nu _{N}p\left( x_{N}\right) =0$. Moreover $4\pi
\alpha N\leq 1$. In particular, $N\in \mathcal{N}_{m}$ and hence $N\geq
N_{m} $. It follows that%
\begin{equation*}
\alpha \leq \frac{1}{4\pi N}\leq \frac{1}{4\pi N_{m}}.
\end{equation*}%
This contradicts with the choice of $\alpha $.
\end{proof}

Next we want to show the constant $\frac{1}{4\pi N_{m}}+\varepsilon $ in (%
\ref{eq1.16}) is almost sharp.

\begin{lemma}
\label{lem3.1}Assume $m\in \mathbb{N}$. If $a\geq 0$ and $c\in \mathbb{R}$
such that for any $u\in H^{1}\left( \mathbb{S}^{2}\right) $ with $\overline{u%
}=0$ and $\int_{\mathbb{S}^{2}}pe^{2u}d\mu =0$ for every $p\in \overset{%
\circ }{\mathcal{P}}_{m}$, we have%
\begin{equation}
\log \int_{\mathbb{S}^{2}}e^{2u}d\mu \leq a\left\Vert \nabla u\right\Vert
_{L^{2}}^{2}+c,  \label{eq3.8}
\end{equation}%
then $a\geq \frac{1}{4\pi N_{m}}$.
\end{lemma}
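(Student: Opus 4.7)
The plan is a test-function argument by contradiction. Assuming $a<\frac{1}{4\pi N_{m}}$, I will construct, for every large $\lambda$, a function $u_{\lambda}\in H^{1}(\mathbb{S}^{2})$ with $\overline{u_{\lambda}}=0$ and $\int_{\mathbb{S}^{2}}pe^{2u_{\lambda}}d\mu=0$ for every $p\in\overset{\circ}{\mathcal{P}}_{m}$, such that $\log\int e^{2u_{\lambda}}d\mu-a\|\nabla u_{\lambda}\|_{L^{2}}^{2}\to\infty$ as $\lambda\to\infty$, directly contradicting (\ref{eq3.8}).

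By the definition of $N_{m}$, fix points $x_{1},\ldots,x_{N_{m}}\in\mathbb{S}^{2}$ and strictly positive weights $\nu_{1},\ldots,\nu_{N_{m}}$ (positivity follows from minimality of $N_{m}$) with $\sum\nu_{i}=1$ and $\sum\nu_{i}p(x_{i})=0$ for every $p\in\overset{\circ}{\mathcal{P}}_{m}$. For each $i$ let $v_{\lambda}^{(i)}$ be the log--conformal factor of a M\"{o}bius dilation of $\mathbb{S}^{2}$ concentrating at $x_{i}$ with scale $\lambda$; these belong to $L^{\infty}\cap H^{1}$ and satisfy $\int e^{2v_{\lambda}^{(i)}}d\mu=4\pi$, $\|\nabla v_{\lambda}^{(i)}\|_{L^{2}}^{2}=8\pi\log\lambda+O(1)$, and $e^{2v_{\lambda}^{(i)}}d\mu\rightharpoonup4\pi\delta_{x_{i}}$. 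Set $g_{\lambda}=\sum_{i}\nu_{i}e^{2v_{\lambda}^{(i)}}$, $w_{\lambda}=\tfrac{1}{2}\log g_{\lambda}$, and $u_{\lambda}^{0}=w_{\lambda}-\overline{w_{\lambda}}$. A direct asymptotic computation---using that $g_{\lambda}\approx\nu_{j}e^{2v_{\lambda}^{(j)}}$ on a small neighborhood of each $x_{j}$, while $\log g_{\lambda}\approx-2\log\lambda+O(1)$ on the bulk---yields $\log\int e^{2u_{\lambda}^{0}}d\mu=2\log\lambda+O(1)$ and $\|\nabla u_{\lambda}^{0}\|_{L^{2}}^{2}=8\pi N_{m}\log\lambda+O(1)$, so the ratio already approaches the critical value $\frac{1}{4\pi N_{m}}$.

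The moment conditions are only approximately satisfied. Expanding the M\"{o}bius change of variables gives $\int pe^{2v_{\lambda}^{(i)}}d\mu=4\pi p(x_{i})+O(\log\lambda/\lambda^{2})$, so the cubature identity yields $\int pe^{2u_{\lambda}^{0}}d\mu=O(\log\lambda)$ for every $p\in\overset{\circ}{\mathcal{P}}_{m}$, small compared to $\int e^{2u_{\lambda}^{0}}d\mu\sim4\pi\lambda^{2}$. To enforce the moments exactly, fix a basis $p_{1},\ldots,p_{L}$ of $\overset{\circ}{\mathcal{P}}_{m}$ and look for $a=(a_{1},\ldots,a_{L})\in\mathbb{R}^{L}$ so that $u_{\lambda}:=u_{\lambda}^{0}+\sum_{k}a_{k}p_{k}$ satisfies $\int p_{k}e^{2u_{\lambda}}d\mu=0$ for every $k$ (the normalization $\overline{u_{\lambda}}=0$ is automatic since each $p_{k}$ has mean zero). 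The Jacobian of this moment map at $a=0$ is twice the Gram matrix $G_{\lambda}=\left(\int p_{k}p_{j}e^{2u_{\lambda}^{0}}d\mu\right)_{kj}$, which is positive definite for each fixed $\lambda$ because $p_{1},\ldots,p_{L}$ are linearly independent polynomials and $e^{2u_{\lambda}^{0}}>0$. A quantitative implicit function argument---based on the expansion $G_{\lambda}=4\pi\lambda^{2}M+O(\log\lambda)$ with $M_{kj}=\sum_{i}\nu_{i}p_{k}(x_{i})p_{j}(x_{i})$, treating the range and kernel of $M$ separately---produces a solution $a^{\lambda}$ with $\|a^{\lambda}\|\leq C$ uniformly in $\lambda$.

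Since $\sum_{k}a_{k}^{\lambda}p_{k}$ is uniformly bounded in $L^{\infty}\cap H^{1}$, the perturbation changes $\log\int e^{2u_{\lambda}^{0}}d\mu$ by only $O(1)$ and changes $\|\nabla u_{\lambda}^{0}\|_{L^{2}}^{2}$ by at most $O(\sqrt{\log\lambda})$ (the cross term is controlled by the Poincar\'{e} inequality since $\overline{u_{\lambda}^{0}}=0$ and $\|\nabla u_{\lambda}^{0}\|_{L^{2}}\sim\sqrt{\log\lambda}$). Therefore $\log\int e^{2u_{\lambda}}d\mu-a\|\nabla u_{\lambda}\|_{L^{2}}^{2}=(2-8\pi N_{m}a)\log\lambda+O(\sqrt{\log\lambda})$, and the coefficient $2-8\pi N_{m}a$ is strictly positive by assumption, so the left-hand side tends to $+\infty$, contradicting (\ref{eq3.8}). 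The hardest step is the perturbation: when $L>N_{m}$ (the generic case) the limit matrix $M$ is singular and $G_{\lambda}$ becomes ill-conditioned as $\lambda\to\infty$, so the uniform bound on $\|a^{\lambda}\|$ relies on matching the $O(\log\lambda)$ decay of the approximate moments in the kernel of $M$ against the $O(\log\lambda)$ size of the smallest singular value of $G_{\lambda}$ in that direction.
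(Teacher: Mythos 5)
Your overall strategy is the same as the paper's: take a minimal cubature rule, build a test function that is a superposition of $N_{m}$ bubbles at the cubature points with heights tuned by $\tfrac12\log\nu_{i}$, correct it to satisfy the moment conditions exactly, and plug into the assumed inequality to read off $a\geq\frac{1}{4\pi N_{m}}$ as the concentration parameter goes to its limit. The choice of bubble (M\"obius conformal factors versus the truncated logarithm $\phi_{\varepsilon,b}$ the paper uses) is cosmetic. The genuine gap is in the moment correction, and it is exactly the step you flag as ``the hardest.''

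The paper avoids your difficulty entirely by correcting $e^{2u}$ \emph{additively}: it sets $e^{2u}=e^{2v}+\sum_{j}\beta_{j}\psi_{j}+c_{1}\log\tfrac{1}{\varepsilon}$ with $\psi_{j}\in C_{c}^{\infty}$ supported away from all bubble centers. Then the moment constraints $\int e^{2u}p_{k}\,d\mu=0$ become a \emph{fixed, nondegenerate linear system} for the $\beta_{j}$ (the matrix $\bigl[\int\psi_{j}p_{k}\,d\mu\bigr]$ does not depend on $\varepsilon$), so $\beta_{j}=O(\log\tfrac1\varepsilon)$ for free, positivity is enforced by $c_{1}\log\tfrac1\varepsilon$, and the energy estimate goes through because the correction is constant near each bubble. (The paper also pre-absorbs the mean-zero constraint into the inequality as (\ref{eq3.9}) so that the additive correction need not preserve $\overline{u}=0$.) Your correction instead adds $\sum_{k}a_{k}p_{k}$ to the exponent, which makes the moment map nonlinear in $a$ with Jacobian $G_{\lambda}$ degenerating as $\lambda\to\infty$. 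Two things you assert but do not prove are both nonobvious: first, that the restriction of $G_{\lambda}$ to $\ker M$ has smallest eigenvalue of size comparable to $\log\lambda$ --- this is equivalent to the claim that no nonzero $p\in\overset{\circ}{\mathcal{P}}_{m}$ vanishes to order $\geq 2$ at every cubature point, which depends on the specific geometry of the cubature rule and is not checked; second, even granting that eigenvalue lower bound, the moment map is genuinely nonlinear and a crude Newton--Kantorovich estimate of the form $\|D^{2}F\|\,\|DF(0)^{-1}\|^{2}\,\|F(0)\|\lesssim 1$ does not close (the condition number of $G_{\lambda}$ is of order $\lambda^{2}/\log\lambda$). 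A finer multi-scale analysis of the quadratic remainder in the $\ker M$ and $\operatorname{ran}M$ directions might rescue the argument, but as written this is an unfilled hole, whereas the paper's linear correction makes it a triviality.

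Two smaller remarks. The cross-term estimate $\|\nabla u_{\lambda}\|_{L^{2}}^{2}=\|\nabla u_{\lambda}^{0}\|_{L^{2}}^{2}+O(\sqrt{\log\lambda})$ follows from Cauchy--Schwarz alone (Poincar\'e plays no role) but in any case presupposes the uniform bound $\|a^{\lambda}\|\leq C$ that you have not established. And it is worth recording, as the paper does, that the argument actually yields $a\geq\frac{1}{4\pi N}$ for \emph{every} $N\in\mathcal{N}_{m}$; you specialize to $N=N_{m}$ at the outset, which is fine for the stated lemma but slightly less transparent.
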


\begin{proof}
First we note that we can rewrite the assumption as for any $u\in
H^{1}\left( \mathbb{S}^{2}\right) $ with $\int_{\mathbb{S}^{2}}pe^{2u}d\mu
=0 $ for every $p\in \overset{\circ }{\mathcal{P}}_{m}$, we have%
\begin{equation}
\log \int_{\mathbb{S}^{2}}e^{2u}d\mu \leq a\left\Vert \nabla u\right\Vert
_{L^{2}}^{2}+2\overline{u}+c.  \label{eq3.9}
\end{equation}

Assume $N\in \mathbb{N}$, $x_{1},\cdots ,x_{N}\in \mathbb{S}^{2}$ and $\nu
_{1},\cdots ,\nu _{N}\in \left[ 0,\infty \right) $ s.t. $\nu _{1}+\cdots
+\nu _{N}=1$ and for any $p\in \overset{\circ }{\mathcal{P}}_{m}$, $\nu
_{1}p\left( x_{1}\right) +\cdots +\nu _{N}p\left( x_{N}\right) =0$. We will
prove $a\geq \frac{1}{4\pi N}$. Lemma \ref{lem3.1} follows. Without losing
of generality we can assume $\nu _{i}>0$ for $1\leq i\leq N$ and $x_{i}\neq
x_{j}$ for $1\leq i<j\leq N$.

To continue let us fix some notations. For $x,y\in \mathbb{S}^{2}$, we
denote $\overline{xy}$ as the geodesic distance between $x$ and $y$ on $%
\mathbb{S}^{2}$. For $r>0$ and $x\in \mathbb{S}^{2}$, we denote $B_{r}\left(
x\right) $ as the geodesic ball with radius $r$ and center $x$ i.e. $%
B_{r}\left( x\right) =\left\{ y\in \mathbb{S}^{2}:\overline{xy}<r\right\} $.

Let $\delta >0$ be small enough such that for $1\leq i<j\leq N$, $\overline{%
B_{2\delta }\left( x_{i}\right) }\cap \overline{B_{2\delta }\left(
x_{j}\right) }=\emptyset $. For $0<\varepsilon <\delta $, we let%
\begin{equation*}
\phi _{\varepsilon }\left( t\right) =\left\{ 
\begin{array}{cc}
2\log \frac{\delta }{\varepsilon }, & 0<t<\varepsilon ; \\ 
2\log \frac{\delta }{t}, & \varepsilon <t<\delta ; \\ 
0, & t>\delta .%
\end{array}%
\right.
\end{equation*}%
If $b\in \mathbb{R}$, then we write%
\begin{equation*}
\phi _{\varepsilon ,b}\left( t\right) =\left\{ 
\begin{array}{cc}
\phi _{\varepsilon }\left( t\right) +b, & 0<t<\delta ; \\ 
b\left( 2-\frac{t}{\delta }\right) , & \delta <t<2\delta ; \\ 
0, & t>2\delta .%
\end{array}%
\right.
\end{equation*}

Let%
\begin{equation}
v\left( x\right) =\sum_{i=1}^{N}\phi _{\varepsilon ,\frac{1}{2}\log \nu
_{i}}\left( \overline{xx_{i}}\right) ,  \label{eq3.10}
\end{equation}%
then%
\begin{eqnarray}
\int_{\mathbb{S}^{2}}e^{2v}d\mu &=&\sum_{i=1}^{N}\int_{B_{\delta }\left(
x_{i}\right) }e^{2\phi _{\varepsilon }\left( \overline{xx_{i}}\right) +\log
\nu _{i}}d\mu +O\left( 1\right)  \label{eq3.11} \\
&=&2\pi \int_{0}^{\delta }e^{2\phi _{\varepsilon }\left( r\right) }\sin
rdr+O\left( 1\right)  \notag \\
&=&2\pi \delta ^{4}\varepsilon ^{-2}+O\left( \log \frac{1}{\varepsilon }%
\right)  \notag
\end{eqnarray}%
as $\varepsilon \rightarrow 0^{+}$.

Note that since $\dim \left( \left. \overset{\circ }{\mathcal{P}}%
_{m}\right\vert _{\mathbb{S}^{2}}\right) =m^{2}+2m$, we can fix $%
p_{1},\cdots ,p_{m^{2}+2m}\in \overset{\circ }{\mathcal{P}}_{m}$ such that $%
\left. p_{1}\right\vert _{\mathbb{S}^{2}},\cdots ,\left.
p_{m^{2}+2m}\right\vert _{\mathbb{S}^{2}}$ is a base for $\left. \overset{%
\circ }{\mathcal{P}}_{m}\right\vert _{\mathbb{S}^{2}}$. For $1\leq j\leq
m^{2}+2m$, we have%
\begin{equation}
\int_{\mathbb{S}^{2}}e^{2v}p_{j}d\mu =O\left( \log \frac{1}{\varepsilon }%
\right)  \label{eq3.12}
\end{equation}%
as $\varepsilon \rightarrow 0^{+}$. Indeed,%
\begin{eqnarray*}
&&\int_{\mathbb{S}^{2}}e^{2v}p_{j}d\mu \\
&=&\dsum\limits_{i=1}^{N}\nu _{i}\int_{B_{\delta }\left( x_{i}\right)
}e^{\phi _{\varepsilon }\left( \overline{xx_{i}}\right) }p_{j}\left(
x\right) d\mu \left( x\right) +O\left( 1\right) \\
&=&\dsum\limits_{i=1}^{N}\left( \nu _{i}p_{j}\left( x_{i}\right)
\int_{B_{\delta }\left( x_{i}\right) }e^{\phi _{\varepsilon }\left( 
\overline{xx_{i}}\right) }d\mu \left( x\right) +\int_{B_{\delta }\left(
x_{i}\right) }e^{\phi _{\varepsilon }\left( \overline{xx_{i}}\right)
}O\left( \overline{xx_{i}}^{2}\right) d\mu \left( x\right) \right) +O\left(
1\right) ,
\end{eqnarray*}%
here we have used the Talyor expansion of $p_{j}$ near $x_{i}$ and the
vanishing of integral of first order terms by symmetry. Using%
\begin{equation*}
\dsum\limits_{i=1}^{N}\nu _{i}p_{j}\left( x_{i}\right) =0,
\end{equation*}%
we see%
\begin{equation*}
\int_{\mathbb{S}^{2}}e^{2v}p_{j}d\mu =O\left( \log \frac{1}{\varepsilon }%
\right) .
\end{equation*}

To get a test function satisfying orthogornality condition, we need to do
some corrections. We first claim that there exists $\psi _{1},\cdots ,\psi
_{m^{2}+2m}\in C_{c}^{\infty }\left( \mathbb{S}^{2}\backslash
\dbigcup\limits_{i=1}^{N}\overline{B_{2\delta }\left( x_{i}\right) }\right) $
such that the determinant%
\begin{equation}
\det \left[ \int_{\mathbb{S}^{2}}\psi _{j}p_{k}d\mu \right] _{1\leq j,k\leq
m^{2}+2m}\neq 0.  \label{eq3.13}
\end{equation}%
Indeed, here is one way to construct these functions. Fix a nonzero smooth
function $\eta \in C_{c}^{\infty }\left( \mathbb{S}^{2}\backslash
\dbigcup\limits_{i=1}^{N}\overline{B_{2\delta }\left( x_{i}\right) }\right) $%
, then $\eta p_{1},\cdots ,\eta p_{m^{2}+2m}$ are linearly independent. It
follows that the matrix%
\begin{equation*}
\left[ \int_{\mathbb{S}^{2}}\eta ^{2}p_{j}p_{k}d\mu \right] _{1\leq j,k\leq
m^{2}+2m}
\end{equation*}%
is positive definite and has positive determinant. Then $\psi _{j}=\eta
^{2}p_{j}$ satisfies the claim.

It follows from (\ref{eq3.13}) that we can find $\beta _{1},\cdots ,\beta
_{m^{2}+2m}\in \mathbb{R}$ such that%
\begin{equation}
\int_{\mathbb{S}^{2}}\left( e^{2v}+\sum_{j=1}^{m^{2}+2m}\beta _{j}\psi
_{j}\right) p_{k}d\mu =0  \label{eq3.14}
\end{equation}%
for $k=1,\cdots ,m^{2}+2m$. Moreover%
\begin{equation}
\beta _{j}=O\left( \log \frac{1}{\varepsilon }\right)  \label{eq3.15}
\end{equation}%
as $\varepsilon \rightarrow 0^{+}$. As a consequence we can find a constant $%
c_{1}>0$ such that%
\begin{equation}
\sum_{j=1}^{m^{2}+2m}\beta _{j}\psi _{j}+c_{1}\log \frac{1}{\varepsilon }%
\geq \log \frac{1}{\varepsilon }.  \label{eq3.16}
\end{equation}%
We define $u$ as%
\begin{equation}
e^{2u}=e^{2v}+\sum_{j=1}^{m^{2}+2m}\beta _{j}\psi _{j}+c_{1}\log \frac{1}{%
\varepsilon }.  \label{eq3.17}
\end{equation}%
Note this $u$ will be the test function we use to prove Lemma \ref{lem3.1}.

It follows from (\ref{eq3.14}) that $\int_{\mathbb{S}^{2}}e^{2u}pd\mu =0$
for all $p\in \overset{\circ }{\mathcal{P}}_{m}$. Moreover using (\ref%
{eq3.11}) and (\ref{eq3.15}) we see%
\begin{equation}
\int_{\mathbb{S}^{2}}e^{2u}d\mu =2\pi \delta ^{4}\varepsilon ^{-2}+O\left(
\log \frac{1}{\varepsilon }\right) =2\pi \delta ^{4}\varepsilon ^{-2}\left(
1+o\left( 1\right) \right) ,  \label{eq3.18}
\end{equation}%
hence%
\begin{equation}
\log \int_{\mathbb{S}^{2}}e^{2u}d\mu =2\log \frac{1}{\varepsilon }+O\left(
1\right)  \label{eq3.19}
\end{equation}%
as $\varepsilon \rightarrow 0^{+}$. Calculation shows%
\begin{equation}
\overline{u}=o\left( \log \frac{1}{\varepsilon }\right) .  \label{eq3.20}
\end{equation}%
At last we claim%
\begin{equation}
\int_{\mathbb{S}^{2}}\left\vert \nabla u\right\vert ^{2}d\mu =8\pi N\log 
\frac{1}{\varepsilon }+o\left( \log \frac{1}{\varepsilon }\right) .
\label{eq3.21}
\end{equation}%
Once this is known, we plug $u$ into (\ref{eq3.9}) and get%
\begin{equation*}
2\log \frac{1}{\varepsilon }\leq 8\pi Na\log \frac{1}{\varepsilon }+o\left(
\log \frac{1}{\varepsilon }\right) .
\end{equation*}%
Divide $\log \frac{1}{\varepsilon }$ on both sides and let $\varepsilon
\rightarrow 0^{+}$, we see $a\geq \frac{1}{4\pi N}$.

To derive (\ref{eq3.21}), we note that on $\mathbb{S}^{2}\backslash
\dbigcup\limits_{i=1}^{N}\overline{B_{2\delta }\left( x_{i}\right) }$, $%
\left\vert \nabla u\right\vert =O\left( 1\right) $ (here we need to use (\ref%
{eq3.15}) and (\ref{eq3.16})), hence%
\begin{eqnarray*}
\int_{\mathbb{S}^{2}}\left\vert \nabla u\right\vert ^{2}d\mu
&=&\sum_{i=1}^{N}\int_{B_{2\delta }\left( x_{i}\right) }\left\vert \nabla
u\right\vert ^{2}d\mu +O\left( 1\right) \\
&=&\sum_{i=1}^{N}\int_{B_{\delta }\left( x_{i}\right) }\left\vert \nabla
u\right\vert ^{2}d\mu +O\left( 1\right) \\
&=&\sum_{i=1}^{N}8\pi \int_{\varepsilon }^{\delta }\frac{r^{-10}\sin r}{%
\left( \frac{c_{1}\log \frac{1}{\varepsilon }}{\nu _{i}\delta ^{4}}%
+r^{-4}\right) ^{2}}dr+O\left( 1\right) \\
&=&8\pi N\log \frac{1}{\varepsilon }+o\left( \log \frac{1}{\varepsilon }%
\right) .
\end{eqnarray*}
\end{proof}

\section{The number $N_{m}$\label{sec4}}

We start with the following basic observation.

\begin{example}
\label{ex4.1}$N_{1}=2$. It is clear that $N_{1}\geq 2$, on the other hand,
by setting $\nu _{1}=\nu _{2}=\frac{1}{2}$ and $x_{2}=-x_{1}$, we see $%
N_{1}\leq 2$. Hence $N_{1}=2$.
\end{example}

\begin{lemma}
\label{lem4.1}$N_{2}=4$.
\end{lemma}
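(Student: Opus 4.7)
The plan is to establish $N_2 = 4$ by proving the two inequalities separately. For the upper bound $N_2 \leq 4$, I will exhibit an explicit cubature with four points: the vertices of a regular tetrahedron inscribed in $\mathbb{S}^2$, say
$$x_1 = \tfrac{1}{\sqrt{3}}(1,1,1),\ x_2 = \tfrac{1}{\sqrt{3}}(1,-1,-1),\ x_3 = \tfrac{1}{\sqrt{3}}(-1,1,-1),\ x_4 = \tfrac{1}{\sqrt{3}}(-1,-1,1),$$
with equal weights $\nu_i = 1/4$. Since $\mathcal{P}_2$ is spanned by $1$, the three coordinate functions, and the six quadratic monomials $x_ix_j$, it suffices to verify the cubature identity on this basis. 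The constant and linear cases are immediate ($\sum x_i = 0$ by vertex symmetry), and for quadratics I would note that $x_i^2 = y_i^2 = z_i^2 = 1/3$ at each vertex while the off-diagonal $x_iy_j$-values cancel in pairs. Comparing with $\frac{1}{4\pi}\int_{\mathbb{S}^2} x^2\, d\mu = \frac{1}{3}$ and the vanishing of off-diagonal integrals by symmetry gives exactness on all of $\mathcal{P}_2$, so $4 \in \mathcal{N}_2$.

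For the lower bound $N_2 \geq 4$, I would argue by contradiction: suppose $N \leq 3$ points $x_1,\ldots,x_N \in \mathbb{S}^2$ and nonnegative weights $\nu_1,\ldots,\nu_N$ summing to $1$ satisfy $\sum_i \nu_i p(x_i) = \frac{1}{4\pi}\int_{\mathbb{S}^2} p\, d\mu$ for every $p \in \mathcal{P}_2$. Applying this to the coordinate functions yields the vector identity $\sum_i \nu_i x_i = 0$. I split into two cases based on whether $x_1,\ldots,x_N$ are linearly independent in $\mathbb{R}^3$.

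If they are linearly independent (which forces $N = 3$), the identity $\sum_i \nu_i x_i = 0$ immediately forces $\nu_1=\nu_2=\nu_3=0$, contradicting $\sum \nu_i = 1$. If they are linearly dependent, pick a unit vector $v \in \mathbb{R}^3$ orthogonal to $\text{span}\{x_1,\ldots,x_N\}$ and consider the polynomial
$$p(x) = (v\cdot x)^2 - \tfrac{1}{3}|x|^2 \in \mathcal{P}_2.$$
Since $|x|^2 = 1$ on $\mathbb{S}^2$ and $\frac{1}{4\pi}\int_{\mathbb{S}^2}(v\cdot x)^2 d\mu = 1/3$, this $p$ has mean zero, so $p \in \overset{\circ}{\mathcal{P}}_2$ and the cubature gives $\sum_i \nu_i p(x_i) = 0$. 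But $v \cdot x_i = 0$ and $|x_i|^2 = 1$, whence $\sum_i \nu_i p(x_i) = -\tfrac{1}{3}\sum_i \nu_i = -1/3$, contradiction. Combined with the construction, this yields $N_2 = 4$.

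The main obstacle is cleanly disposing of the linearly independent subcase; everything else is a direct verification. The argument above handles it in one line via the first-moment condition, so the bulk of the work is simply the symmetry bookkeeping for the tetrahedral quadrature.
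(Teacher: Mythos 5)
Your proof is correct and follows essentially the same path as the paper's: the regular tetrahedron with equal weights gives $4\in\mathcal{N}_2$, and for $N\le 3$ the first-moment identity $\sum_i\nu_i x_i=0$ plus a quadratic $(v\cdot x)^2-\tfrac{1}{3}$ with $v$ orthogonal to $\limfunc{span}\{x_1,\dots,x_N\}$ gives a contradiction --- the paper simply organizes this as separate $N=2$ and $N=3$ cases with an explicit rotation (using $y_1^2$ resp.\ $y_3^2$) rather than your unified linear-independence dichotomy. One minor slip: linear independence of $x_1,\dots,x_N$ with $N\le 3$ does not force $N=3$ (one or two linearly independent unit vectors are also possible), but the conclusion $\nu_i\equiv 0$ from $\sum_i\nu_i x_i=0$ holds for any $N\le 3$, so the argument is unaffected.
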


\begin{proof}
Indeed it follows from (\ref{eq1.18}) that $N_{2}\geq 4$. Here we give a
direct proof. Note that $N_{2}\geq N_{1}=2$.

If $N_{2}=2$, then we have $\nu _{1}x_{1}+\nu _{2}x_{2}=0$. It implies $\nu
_{1}=\nu _{2}=\frac{1}{2}$. Hence $x_{2}=-x_{1}$. By rotation, we assume $%
x_{1}=\left( 0,0,1\right) $. Let $p\left( y\right) =y_{1}^{2}$, then%
\begin{equation*}
\nu _{1}p\left( x_{1}\right) +\nu _{2}p\left( x_{2}\right) =0\neq \frac{1}{%
4\pi }\int_{\mathbb{S}^{2}}pd\mu .
\end{equation*}%
We get a contradiction.

If $N_{2}=3$, then we have $\nu _{1}x_{1}+\nu _{2}x_{2}+\nu _{3}x_{3}=0$. It
follows that $x_{1},x_{2},x_{3}$ must lie in a plane. By rotation we can
assume that plane is the horizontal plane. Let $p=y_{3}^{2}$, then%
\begin{equation*}
\nu _{1}p\left( x_{1}\right) +\nu _{2}p\left( x_{2}\right) +\nu _{3}p\left(
x_{3}\right) =0\neq \frac{1}{4\pi }\int_{\mathbb{S}^{2}}pd\mu .
\end{equation*}%
This gives us a contradiction.

Hence we only need to find $x_{1},x_{2},x_{3},x_{4}\in \mathbb{S}^{2}$, $\nu
_{1},\nu _{2},\nu _{3},\nu _{4}\geq 0$ with $\nu _{1}+\nu _{2}+\nu _{3}+\nu
_{4}=1$ such that for any $p\in \overset{\circ }{\mathcal{P}}_{2}$, we have%
\begin{equation}
\nu _{1}p\left( x_{1}\right) +\nu _{2}p\left( x_{2}\right) +\nu _{3}p\left(
x_{3}\right) +\nu _{4}p\left( x_{4}\right) =0.  \label{eq4.1}
\end{equation}%
We claim the four vortices of a regular tetrahedron inside the unit sphere
with $\nu _{i}=\frac{1}{4}$ for $1\leq i\leq 4$ would satisfy the property.
Indeed, let%
\begin{eqnarray*}
x_{1} &=&\left( 0,0,1\right) ; \\
x_{2} &=&\left( 0,\frac{2\sqrt{2}}{3},-\frac{1}{3}\right) ; \\
x_{3} &=&\left( \sqrt{\frac{2}{3}},-\frac{\sqrt{2}}{3},-\frac{1}{3}\right) ;
\\
x_{4} &=&\left( -\sqrt{\frac{2}{3}},-\frac{\sqrt{2}}{3},-\frac{1}{3}\right) .
\end{eqnarray*}%
Then we have%
\begin{equation*}
x_{1}+x_{2}+x_{3}+x_{4}=0.
\end{equation*}%
Moreover using%
\begin{equation*}
\mathcal{H}_{2}=\limfunc{span}\left\{ y_{1}^{2}-\frac{\left\vert
y\right\vert ^{2}}{3},y_{2}^{2}-\frac{\left\vert y\right\vert ^{2}}{3}%
,y_{1}y_{2},y_{1}y_{3},y_{2}y_{3}\right\} ,
\end{equation*}%
checking (\ref{eq4.1}) for each $p$ in the base verifies the identity.
\end{proof}

It remains an interesting question to find $N_{m}$ for all $m$'s.

\section{A sharp inequality by perturbation\label{sec5}}

In this section we prove a sharp inequality by the perturbation method in
the same spirit as \cite{ChY1}.

\begin{theorem}
\label{thm5.1}There exists an $a_{0}<\frac{1}{8\pi }$ such that for all $%
u\in H^{1}\left( \mathbb{S}^{2}\right) $ satisfying $\int_{\mathbb{S}%
^{2}}ud\mu =0$ and for every $p\in \overset{\circ }{\mathcal{P}}_{2}$, $%
\int_{\mathbb{S}^{2}}pe^{2u}d\mu =0$, we have%
\begin{equation}
\log \left( \frac{1}{4\pi }\int_{\mathbb{S}^{2}}e^{2u}d\mu \right) \leq
a_{0}\left\Vert \nabla u\right\Vert _{L^{2}}^{2}.  \label{eq5.1}
\end{equation}
\end{theorem}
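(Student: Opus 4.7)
The plan is a contradiction argument in the perturbation spirit of \cite{ChY1}. Let $\mathcal{C}$ denote the constraint set $\{u \in H^1(\mathbb{S}^2) : \bar u = 0,\ \int_{\mathbb{S}^2} p\, e^{2u} d\mu = 0 \text{ for every } p \in \overset{\circ}{\mathcal{P}}_2\}$ and define
\[
a^{\ast} = \inf\bigl\{a > 0 : F_a(u) \le 0\ \text{for all}\ u \in \mathcal{C}\bigr\},\qquad F_a(u):=\log\bigl(\tfrac{1}{4\pi}\textstyle\int_{\mathbb{S}^2} e^{2u} d\mu\bigr) - a\|\nabla u\|_{L^2}^2.
\]
Inequality (\ref{eq1.6}), which applies a fortiori on $\mathcal{C}$, gives $a^{\ast} \le \frac{1}{8\pi}$. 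Suppose for contradiction that $a^{\ast} = \frac{1}{8\pi}$; I will derive a contradiction by blowing up near $u \equiv 0$.

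First, for each $a \in (\frac{1}{16\pi},\frac{1}{8\pi})$ I would show the supremum $\Lambda_a := \sup_{\mathcal{C}} F_a$ is attained. Corollary \ref{cor1.1} yields $\Lambda_a < \infty$, and a maximizing sequence $u_n$ cannot blow up: if $\|\nabla u_n\|_{L^2} \to \infty$ with $F_a(u_n) \to \Lambda_a \ge 0$, then rescaling and invoking Proposition \ref{prop2.1} produces a limit measure $\nu = \sum_{i=1}^{N}\nu_i\delta_{x_i}$ with $4\pi a N \le 1$. Passing to the limit in the nonlinear constraint $\int_{\mathbb{S}^2} p\, e^{2u_n}d\mu / \int_{\mathbb{S}^2} e^{2u_n}d\mu = 0$ shows $\nu$ is a degree-$2$ cubature formula with nonnegative weights, so $N \ge N_2 = 4$ by Lemma \ref{lem4.1}; hence $a \le \frac{1}{16\pi}$, contrary to the choice of $a$. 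Therefore a maximizer $u_a \in \mathcal{C}$ of $F_a$ exists.

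Next, take $a_n \nearrow \frac{1}{8\pi}$ and $u_n := u_{a_n}$. Under the contradiction hypothesis each $\Lambda_{a_n} > 0$, and $\Lambda_{a_n} \to 0^{+}$. Since $\{u_n\}$ is $H^1$-bounded, Rellich-Kondrachov together with the uniform $L^p$ bound on $e^{2u_n}$ coming from Moser-Trudinger yields (along a subsequence) $u_n \rightharpoonup u_{\ast}$ weakly in $H^1$ and $e^{2u_n} \to e^{2u_{\ast}}$ in $L^1$. Then $u_{\ast} \in \mathcal{C}$ and $F_{1/(8\pi)}(u_{\ast}) \ge \limsup F_{a_n}(u_n) = 0$. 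The equality case in (\ref{eq1.6}), achieved only at $u \equiv 0$ once one imposes $\bar u = 0$ and $\int_{\mathbb{S}^2} x_i e^{2u} d\mu = 0$, forces $u_{\ast} \equiv 0$; combined with $F_{a_n}(u_n) > 0$ and $\log\bigl(\tfrac{1}{4\pi}\int e^{2u_n}d\mu\bigr) \to 0$, this yields $t_n := \|\nabla u_n\|_{L^2} \to 0$.

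Finally, set $v_n := u_n/t_n$, so $\|\nabla v_n\|_{L^2} = 1$. Expanding $\int p\, e^{2u_n}d\mu = 0$ in powers of $t_n$ and using $\int p\, d\mu = 0$ gives $\int p v_n d\mu = O(t_n)$ for every $p \in \overset{\circ}{\mathcal{P}}_2$; so the $L^2$-projection of $v_n$ onto $\bigoplus_{k=1}^{2}\mathcal{H}_k|_{\mathbb{S}^2}$ has norm $O(t_n)$, and the complementary piece $v_n^{\perp}$ lies, up to $O(t_n)$, in $V := \bigoplus_{k \ge 3}\mathcal{H}_k|_{\mathbb{S}^2}$, where the Poincaré inequality with lowest eigenvalue $3\cdot 4 = 12$ gives $\|v_n^{\perp}\|_{L^2}^2 \le \tfrac{1}{12}\|\nabla v_n^{\perp}\|_{L^2}^2$. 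Therefore $\|v_n\|_{L^2}^2 \le \tfrac{1}{12} + O(t_n^2)$. The Taylor expansion (valid since $H^1 \hookrightarrow L^p$ for all $p < \infty$ in dimension $2$)
\[
F_{a_n}(u_n) = \Bigl(\tfrac{1}{2\pi}\|v_n\|_{L^2}^2 - a_n\Bigr) t_n^2 + O(t_n^3) \le \Bigl(\tfrac{1}{24\pi} - a_n + O(t_n)\Bigr) t_n^2
\]
is strictly negative for $n$ large, since $a_n \to \frac{1}{8\pi} > \frac{1}{24\pi}$ and $t_n \to 0$; this contradicts $F_{a_n}(u_n) = \Lambda_{a_n} > 0$. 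The main obstacle I anticipate is the strong-convergence step: establishing the $L^1$-convergence of $e^{2u_n}$ to justify the limit in the nonlinear constraint, and ensuring that the only extremal of (\ref{eq1.6}) lying in $\mathcal{C}$ is $u \equiv 0$; once these are in hand, the second variation computation is mechanical.
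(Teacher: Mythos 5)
Your proposal is correct, and it shares the overall perturbation scaffolding with the paper (existence of extremizers for $a$ close to $\tfrac{1}{8\pi}$, convergence of the extremizers to $0$ as $a\to\tfrac{1}{8\pi}$, and a spectral-gap argument using the third nonzero eigenvalue $12$ of $-\Delta_{\mathbb{S}^2}$), but the closing step is genuinely different. The paper derives the Euler--Lagrange equation (\ref{eq5.5}), proves the Lagrange multipliers $\ell_a,h_a\to 0$, bootstraps to $\|u_a\|_{L^\infty}=o(1)$ via elliptic regularity, and then applies the Poincar\'e inequality with gap $12$ directly to the quantity $e^{2u_a}-\lambda_a$, which is exactly $L^2$-orthogonal to $\mathbb{R}\oplus\mathcal{H}_1|_{\mathbb{S}^2}\oplus\mathcal{H}_2|_{\mathbb{S}^2}$ by the constraint; combining with the EL equation forces $\int(e^{2u_a}-\lambda_a)^2=0$. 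You instead rescale $v_n=u_n/t_n$, read off $\int p\,v_n\,d\mu=O(t_n)$ from a Taylor expansion of the constraint, apply Poincar\'e to the $L^2$-orthogonal projection $v_n^\perp$ onto $\bigoplus_{k\ge3}\mathcal{H}_k|_{\mathbb{S}^2}$, and Taylor expand $F_{a_n}(u_n)$ to second order. This bypasses the Euler--Lagrange equation (and the multiplier and $L^\infty$ estimates) entirely, at the cost of tracking remainders, which is harmless since $\{v_n\}$ is $H^1$-bounded, $\bar v_n=0$, and $\int_{\mathbb{S}^2}e^{c|v_n|}d\mu$ is uniformly bounded by Moser--Trudinger. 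Both arguments close at the same threshold $a>\tfrac{1}{24\pi}$: the paper needs $12-\tfrac{1}{2\pi a}>0$, you need $\tfrac{1}{2\pi}\cdot\tfrac{1}{12}<a_n$. The two worries you raise are handled in the paper: the $L^1$-convergence $e^{2u_n}\to e^{2u_*}$ is standard from $H^1$-boundedness plus (\ref{eq1.2}), and the uniqueness of $u\equiv 0$ as the extremal of (\ref{eq1.6}) in the constraint class is precisely what the paper cites from \cite{GuM}. Your variant is arguably a bit cleaner, but you lose the structural information the EL equation provides (which, e.g., yields the stronger $L^\infty$ control on $u_a$).
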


For convenience we denote%
\begin{equation}
\mathcal{S}_{2}=\left\{ u\in H^{1}\left( \mathbb{S}^{2}\right) :\overline{u}%
=0,\int_{\mathbb{S}^{2}}pe^{2u}d\mu =0\text{ for all }p\in \overset{\circ }{%
\mathcal{P}}_{2}\right\} .  \label{eq5.2}
\end{equation}%
For a given number $a\in \left( \frac{1}{16\pi },\frac{1}{8\pi }\right) $,
it follows from Corollary \ref{cor1.1} that for every $u\in \mathcal{S}_{2}$,%
\begin{equation}
\log \left( \frac{1}{4\pi }\int_{\mathbb{S}^{2}}e^{2u}d\mu \right) \leq
a\left\Vert \nabla u\right\Vert _{L^{2}}^{2}+c_{a}.  \label{eq5.3}
\end{equation}%
Let%
\begin{equation}
s=s_{a}=\inf_{u\in \mathcal{S}_{2}}\left[ a\left\Vert \nabla u\right\Vert
_{L^{2}}^{2}-\log \left( \frac{1}{4\pi }\int_{\mathbb{S}^{2}}e^{2u}d\mu
\right) \right] .  \label{eq5.4}
\end{equation}%
We claim $s$ is achieved. Indeed if $u_{i}\in \mathcal{S}_{2}$ is a
minimizing sequence, then%
\begin{equation*}
a\left\Vert \nabla u_{i}\right\Vert _{L^{2}}^{2}-\log \left( \frac{1}{4\pi }%
\int_{\mathbb{S}^{2}}e^{2u_{i}}d\mu \right) \leq c.
\end{equation*}%
Here $c$ is a constant independent of $i$. Choose a number $\varepsilon $
with $0<\varepsilon <a-\frac{1}{16\pi }$. Using Corollary \ref{cor1.1} we
have%
\begin{equation*}
a\left\Vert \nabla u_{i}\right\Vert _{L^{2}}^{2}\leq \log \left( \frac{1}{%
4\pi }\int_{S^{2}}e^{2u_{i}}d\mu \right) +c\leq \left( \frac{1}{16\pi }%
+\varepsilon \right) \left\Vert \nabla u_{i}\right\Vert _{L^{2}}^{2}+c.
\end{equation*}%
It follows that%
\begin{equation*}
\left\Vert \nabla u_{i}\right\Vert _{L^{2}}\leq c.
\end{equation*}%
After passing to a subsequence we can find $u\in H^{1}\left( \mathbb{S}%
^{2}\right) $ such that $u_{i}\rightharpoonup u$ weakly in $H^{1}\left( 
\mathbb{S}^{2}\right) $. Hence $u_{i}\rightarrow u$ in $L^{2}\left( \mathbb{S%
}^{2}\right) $ and we can also assume $u_{i}\rightarrow u$ a.e. For any $b>0$%
, we have%
\begin{equation*}
2bu_{i}\leq 4\pi \frac{u_{i}^{2}}{\left\Vert \nabla u_{i}\right\Vert
_{L^{2}}^{2}}+\frac{b^{2}\left\Vert \nabla u_{i}\right\Vert _{L^{2}}^{2}}{%
4\pi }.
\end{equation*}%
Hence%
\begin{equation*}
\int_{\mathbb{S}^{2}}e^{2bu_{i}}d\mu \leq ce^{\frac{b^{2}\left\Vert \nabla
u_{i}\right\Vert _{L^{2}}^{2}}{4\pi }}\leq c.
\end{equation*}%
It follows that $e^{2u_{i}}\rightarrow e^{2u}$ in $L^{1}\left( \mathbb{S}%
^{2}\right) $. Hence for any $p\in \overset{\circ }{\mathcal{P}}_{2}$, $%
\int_{\mathbb{S}^{2}}pe^{2u}d\mu =0$. It follows that $u\in \mathcal{S}_{2}$.%
\begin{eqnarray*}
s &\leq &a\left\Vert \nabla u\right\Vert _{L^{2}}^{2}-\log \left( \frac{1}{%
4\pi }\int_{\mathbb{S}^{2}}e^{2u}d\mu \right) \\
&\leq &\lim \inf_{i\rightarrow \infty }\left[ a\left\Vert \nabla
u_{i}\right\Vert _{L^{2}}^{2}-\log \left( \frac{1}{4\pi }\int_{\mathbb{S}%
^{2}}e^{2u_{i}}d\mu \right) \right] \\
&=&s.
\end{eqnarray*}%
Hence $u$ is a minimizer.

Let $u_{a}$ be a minimizer for (\ref{eq5.4}). When no confusion would
happen, we simply write $u$ instead of $u_{a}$. We will show that if $a$ is
close enough to $\frac{1}{8\pi }$, the minimizer $u$ must be identically
zero. This would imply Theorem \ref{thm5.1}.

To achieve this aim, we can assume $\frac{5}{48\pi }<a<\frac{1}{8\pi }$.
Since $u$ is a minimizer, we see%
\begin{equation*}
a\left\Vert \nabla u\right\Vert _{L^{2}}^{2}-\log \left( \frac{1}{4\pi }%
\int_{\mathbb{S}^{2}}e^{2u}d\mu \right) \leq 0.
\end{equation*}%
Hence applying Corollary \ref{cor1.1} we get%
\begin{equation*}
a\left\Vert \nabla u\right\Vert _{L^{2}}^{2}\leq \log \left( \frac{1}{4\pi }%
\int_{\mathbb{S}^{2}}e^{2u}d\mu \right) \leq \frac{1}{12\pi }\left\Vert
\nabla u\right\Vert _{L^{2}}^{2}+c.
\end{equation*}%
It implies $\left\Vert \nabla u\right\Vert _{L^{2}}^{2}\leq c$, a constant
independent of $a$.

Next we claim that as $a\rightarrow \frac{1}{8\pi }$, $u_{a}\rightharpoonup
0 $ weakly in $H^{1}\left( \mathbb{S}^{2}\right) $. Indeed if this is not
the case, then we can find a sequence $a_{i}\rightarrow \frac{1}{8\pi }$, $%
u_{i}=u_{a_{i}}$ such that $u_{i}\rightharpoonup w$ weakly in $H^{1}\left( 
\mathbb{S}^{2}\right) $ and $w\neq 0$. We can also assume $u_{i}\rightarrow
w $ a.e. It follows from classical Moser-Trudinger inequality (see (\ref%
{eq1.2})) that $e^{2u_{i}}\rightarrow e^{2w}$ in $L^{1}\left( \mathbb{S}%
^{2}\right) $. Hence $w\in \mathcal{S}_{2}$. Since%
\begin{equation*}
a_{i}\left\Vert \nabla u_{i}\right\Vert _{L^{2}}^{2}\leq \log \left( \frac{1%
}{4\pi }\int_{\mathbb{S}^{2}}e^{2u_{i}}d\mu \right) ,
\end{equation*}%
taking a limit we get%
\begin{equation*}
\frac{1}{8\pi }\left\Vert \nabla w\right\Vert _{L^{2}}^{2}\leq \log \left( 
\frac{1}{4\pi }\int_{\mathbb{S}^{2}}e^{2w}d\mu \right) .
\end{equation*}%
It follows from equality case of (\ref{eq1.4}) (see \cite{GuM}) that $w=0$.
This gives us a contradiction.

Applying the Moser-Trudinger inequality (\ref{eq1.2}) again we see for any $%
b>0$, $e^{2bu_{a}}\rightarrow 1$ in $L^{q}\left( S^{2}\right) $ for any $%
q\in \left[ 1,\infty \right) $ as $a\rightarrow \frac{1}{8\pi }$. Hence%
\begin{equation*}
a\left\Vert \nabla u_{a}\right\Vert _{L^{2}}^{2}\leq \log \left( \frac{1}{%
4\pi }\int_{\mathbb{S}^{2}}e^{2u_{a}}d\mu \right) \rightarrow 0.
\end{equation*}%
It follows that $\left\Vert \nabla u_{a}\right\Vert _{L^{2}}=o\left(
1\right) $ as $a\rightarrow \frac{1}{8\pi }$.

To continue we observe that since%
\begin{equation*}
\left. \overset{\circ }{\mathcal{P}}_{2}\right\vert _{\mathbb{S}^{2}}=\left. 
\mathcal{H}_{1}\right\vert _{\mathbb{S}^{2}}\oplus \left. \mathcal{H}%
_{2}\right\vert _{\mathbb{S}^{2}}=\left. \left( \mathcal{H}_{1}+\mathcal{H}%
_{2}\right) \right\vert _{\mathbb{S}^{2}},
\end{equation*}%
$u$ satisfies the Euler-Langrage equation%
\begin{equation}
-a\Delta u-\frac{e^{2u}}{\int_{\mathbb{S}^{2}}e^{2u}d\mu }=-\frac{1}{4\pi }%
+\ell e^{2u}+he^{2u}  \label{eq5.5}
\end{equation}%
for some $\ell =\ell _{a}\in \mathcal{H}_{1}$ and $h=h_{a}\in \mathcal{H}%
_{2} $.

Since $\mathcal{H}_{1}+\mathcal{H}_{2}$ is a finite dimensional vector
space, any two norms on it are equivalent. Hence we fix an arbitrary norm on 
$\mathcal{H}_{1}+\mathcal{H}_{2}$ from now on. We claim that $\ell
_{a}\rightarrow 0$ and $h_{a}\rightarrow 0$ as $a\rightarrow \frac{1}{8\pi }$%
. For convenience we write%
\begin{equation*}
\lambda =\frac{1}{4\pi }\int_{\mathbb{S}^{2}}e^{2u}d\mu .
\end{equation*}%
Note that $\lambda =1+o\left( 1\right) $. The equation becomes%
\begin{equation}
-a\Delta u+\frac{1}{4\pi }=e^{2u}\left( \frac{1}{4\pi \lambda }+\ell
+h\right) .  \label{eq5.6}
\end{equation}%
Multiplying $\frac{1}{4\pi \lambda }+\ell +h$ and integrating on $\mathbb{S}%
^{2}$, we see%
\begin{equation*}
\int_{\mathbb{S}^{2}}\left( -a\Delta u+\frac{1}{4\pi }\right) \left( \frac{1%
}{4\pi \lambda }+\ell +h\right) d\mu =\int_{\mathbb{S}^{2}}e^{2u}\left( 
\frac{1}{4\pi \lambda }+\ell +h\right) ^{2}d\mu .
\end{equation*}%
Using the fact $u\in \mathcal{S}_{2}$ it becomes%
\begin{eqnarray*}
&&a\int_{\mathbb{S}^{2}}u\left( 2\ell +6h\right) d\mu \\
&=&\int_{\mathbb{S}^{2}}e^{2u}\left( \ell +h\right) ^{2}d\mu \\
&=&\int_{\mathbb{S}^{2}}\left( e^{2u}-1\right) \left( \ell +h\right)
^{2}d\mu +\int_{\mathbb{S}^{2}}\ell ^{2}d\mu +\int_{\mathbb{S}^{2}}h^{2}d\mu
.
\end{eqnarray*}%
It follows that%
\begin{equation*}
o\left( \left\Vert \ell \right\Vert +\left\Vert h\right\Vert \right) =\int_{%
\mathbb{S}^{2}}\ell ^{2}d\mu +\int_{\mathbb{S}^{2}}h^{2}d\mu +o\left(
\left\Vert \ell \right\Vert ^{2}+\left\Vert h\right\Vert ^{2}\right) .
\end{equation*}%
Hence%
\begin{equation*}
\left\Vert \ell \right\Vert ^{2}+\left\Vert h\right\Vert ^{2}=o\left(
\left\Vert \ell \right\Vert +\left\Vert h\right\Vert \right) .
\end{equation*}%
We get $\left\Vert \ell \right\Vert +\left\Vert h\right\Vert =o\left(
1\right) $.

Now we claim that $\left\Vert u_{a}\right\Vert _{L^{\infty }}=o\left(
1\right) $. Indeed since%
\begin{eqnarray*}
&&\left\Vert e^{2u}\left( \frac{1}{4\pi \lambda }+\ell +h\right) -\frac{1}{%
4\pi }\right\Vert _{L^{2}} \\
&\leq &\left\Vert e^{2u}\left( \frac{1}{4\pi \lambda }-\frac{1}{4\pi }%
\right) \right\Vert _{L^{2}}+\frac{1}{4\pi }\left\Vert e^{2u}-1\right\Vert
_{L^{2}}+\left\Vert e^{2u}\left( \ell +h\right) \right\Vert _{L^{2}} \\
&=&o\left( 1\right) ,
\end{eqnarray*}%
it follows from (\ref{eq5.6}) and standard elliptic theory that $\left\Vert
u_{a}\right\Vert _{W^{2,2}}=o\left( 1\right) $. Sobolev embedding theorem
tells us $\left\Vert u_{a}\right\Vert _{L^{\infty }}=o\left( 1\right) $.

At last we observe that $e^{2u}-\lambda $ is perpendicular to $\mathbb{R}$, $%
\mathcal{H}_{1}$ and $\mathcal{H}_{2}$, hence%
\begin{eqnarray*}
&&12\int_{\mathbb{S}^{2}}\left( e^{2u}-\lambda \right) ^{2}d\mu \\
&\leq &\int_{\mathbb{S}^{2}}\left\vert \nabla e^{2u}\right\vert ^{2}d\mu \\
&=&4\int_{\mathbb{S}^{2}}e^{4u}\left\vert \nabla u\right\vert ^{2}d\mu \\
&=&\int_{\mathbb{S}^{2}}\nabla u\cdot \nabla e^{4u}d\mu \\
&=&\int_{\mathbb{S}^{2}}\left( -\Delta u\right) e^{4u}d\mu \\
&=&\int_{\mathbb{S}^{2}}\left( -\Delta u\right) \left( e^{4u}-\lambda
^{2}\right) d\mu \\
&=&\frac{1}{a}\int_{\mathbb{S}^{2}}\left[ e^{2u}\left( \frac{1}{4\pi \lambda 
}+\ell +h\right) -\frac{1}{4\pi }\right] \left( e^{4u}-\lambda ^{2}\right)
d\mu \\
&=&\frac{1+o\left( 1\right) }{2\pi a}\int_{\mathbb{S}^{2}}\left(
e^{2u}-\lambda \right) ^{2}d\mu +\frac{1}{a}\int_{\mathbb{S}%
^{2}}e^{2u}\left( \ell +h\right) \left( e^{4u}-\lambda ^{2}\right) d\mu .
\end{eqnarray*}%
On the other hand,%
\begin{eqnarray*}
&&\int_{\mathbb{S}^{2}}e^{2u}\left( \ell +h\right) \left( e^{4u}-\lambda
^{2}\right) d\mu \\
&=&\int_{\mathbb{S}^{2}}\left( e^{2u}-\lambda \right) \left( \ell +h\right)
\left( e^{4u}-\lambda ^{2}\right) d\mu +\lambda \int_{\mathbb{S}^{2}}\left(
\ell +h\right) \left( e^{4u}-\lambda ^{2}\right) d\mu \\
&=&o\left( 1\right) \int_{\mathbb{S}^{2}}\left( e^{2u}-\lambda \right)
^{2}d\mu +\lambda \int_{\mathbb{S}^{2}}\left( \ell +h\right) \left(
e^{4u}-2\lambda e^{2u}+\lambda ^{2}\right) d\mu \\
&=&o\left( 1\right) \int_{\mathbb{S}^{2}}\left( e^{2u}-\lambda \right)
^{2}d\mu +\lambda \int_{\mathbb{S}^{2}}\left( \ell +h\right) \left(
e^{2u}-\lambda \right) ^{2}d\mu \\
&=&o\left( 1\right) \int_{\mathbb{S}^{2}}\left( e^{2u}-\lambda \right)
^{2}d\mu .
\end{eqnarray*}%
Here we have used the fact $u\in \mathcal{S}_{2}$. Plug this equality back
we see%
\begin{equation*}
\left( 12-\frac{1}{2\pi a}+o\left( 1\right) \right) \int_{\mathbb{S}%
^{2}}\left( e^{2u}-\lambda \right) ^{2}d\mu \leq 0.
\end{equation*}%
Since $a$ is close to $\frac{1}{8\pi }$, we get $\int_{\mathbb{S}^{2}}\left(
e^{2u}-\lambda \right) ^{2}d\mu =0$. Hence $u$ must be constant function. In
view of the fact $\overline{u}=0$, we get $u=0$. This finishes the proof of
Theorem \ref{thm5.1}.

\section{A revisit of Lebedev-Milin type inequalities on $\mathbb{S}^{1}$%
\label{sec6}}

In this section we will show the above method on $\mathbb{S}^{2}$ provides a
variational approach for a sequence of Lebedev-Milin type inequalities on $%
\mathbb{S}^{1}$. Let $D$ be the unit disk in the plane and $\mathbb{S}%
^{1}=\partial D$ be the unit circle. We use $\theta $ as the usual angle
variable and identify $\mathbb{R}^{2}$ as $\mathbb{C}$.

\begin{theorem}
\label{thm6.1}For $m\in \mathbb{N}$, $u\in H^{1}\left( D\right) $ with $%
\int_{\mathbb{S}^{1}}ud\theta =0$ and $\int_{\mathbb{S}^{1}}e^{u}e^{ik\theta
}d\theta =0$ for $k=1,\cdots ,m$, we have%
\begin{equation}
\log \left( \frac{1}{2\pi }\int_{\mathbb{S}^{1}}e^{u}d\theta \right) \leq 
\frac{1}{4\pi \left( m+1\right) }\left\Vert \nabla u\right\Vert
_{L^{2}\left( D\right) }^{2}.  \label{eq6.1}
\end{equation}%
Moreover equality holds if and only if $u\left( z\right) =\log \frac{1}{%
\left\vert 1-\xi z^{m+1}\right\vert ^{2}}$ for some $\xi \in \mathbb{C}$
with $\left\vert \xi \right\vert <1$.
\end{theorem}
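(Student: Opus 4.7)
The plan is to mirror on the disk $D$ the three-step variational strategy of Sections 2, 3, and 5, replacing the polynomial cubature input $N_m$ on $\mathbb{S}^2$ by trigonometric cubature on $\mathbb{S}^1$. The key arithmetic input is that a probability measure $\nu=\sum_{j=1}^N\nu_j\delta_{\theta_j}$ on $\mathbb{S}^1$ with $\sum\nu_je^{ik\theta_j}=0$ for $k=1,\dots,m$ must carry at least $m+1$ atoms: the nonnegative trigonometric polynomial $T(\theta)=\prod_{j=1}^N\sin^2\frac{\theta-\theta_j}{2}$ has degree $N$ and vanishes exactly at the $\theta_j$, so if $N\le m$ one would get $0=\int T\,d\nu=\widehat T(0)>0$, a contradiction. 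Thus the $\mathbb{S}^1$-analog of $N_m$ is exactly $m+1$, matching the target coefficient $\frac{1}{4\pi(m+1)}$.

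First I would establish the boundary analog of the concentration--compactness principle of Section 2. The substitute for Moser--Trudinger is the trace form of Lebedev--Milin, $\int_{\mathbb{S}^1}e^{\pi u^2}d\theta\le c$ for $u\in H^1(D)$ with $\overline u|_{\mathbb{S}^1}=0$ and $\|\nabla u\|_{L^2(D)}\le 1$. The cut-off argument of Lemma \ref{lem2.2} transfers directly: if $u_i\rightharpoonup u$ weakly in $H^1(D)$ and $|\nabla u_i|^2 dx\to |\nabla u|^2 dx+\sigma$ in measure on $\overline D$, then multiplying $v_i=u_i-u$ by $\varphi\in C^\infty(\overline D)$ with $\varphi|_K=1$ and $\int\varphi^2\,d\sigma<1/p$ yields $\int_Ke^{\pi p u_i^2}d\theta\le c$ for boundary compact $K\subset\mathbb{S}^1$ with $\sigma(K)<1/p$. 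Running Section 3's contradiction: assume the theorem fails with coefficient $\alpha>\frac{1}{4\pi(m+1)}$, pass to $u_i=v_i/m_i$ with $m_i=\|\nabla v_i\|_{L^2(D)}\to\infty$; the elementary bound $m_iu_i\le\pi p u_i^2+\frac{m_i^2}{4\pi p}$ identifies the boundary concentration threshold as $4\pi\alpha$, so the limiting boundary measure $\nu=\sum_{j=1}^N\nu_j\delta_{\theta_j}$ satisfies $N\le\lfloor 1/(4\pi\alpha)\rfloor\le m$; the Vandermonde count above then gives the required contradiction, producing the inequality with coefficient $\frac{1}{4\pi(m+1)}+\varepsilon$.

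The sharp constant is reached by a perturbation argument in the spirit of Section 5. Set
\[
\mathcal S_m=\Bigl\{u\in H^1(D):\overline u|_{\mathbb{S}^1}=0,\ \int_{\mathbb{S}^1}e^u e^{ik\theta}d\theta=0,\ k=1,\dots,m\Bigr\},
\]
and $s_a=\inf_{u\in\mathcal S_m}\bigl[a\|\nabla u\|_{L^2(D)}^2-\log\tfrac1{2\pi}\int_{\mathbb{S}^1}e^u\bigr]$ for $a>\frac{1}{4\pi(m+1)}$; the almost-sharp inequality gives a minimizer $u_a$ via weak compactness and $L^1(\mathbb{S}^1)$-convergence of $e^{u_i}$. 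The Euler--Lagrange equation reads $\Delta u_a=0$ in $D$ with $2a\,\partial_\nu u_a=\frac{e^{u_a}}{\int e^{u_a}d\theta}-\frac{1}{2\pi}+\sum_{k=1}^m(\lambda_k e^{ik\theta}+\overline{\lambda_k}e^{-ik\theta})e^{u_a}$ on $\mathbb{S}^1$. As $a\downarrow\frac{1}{4\pi(m+1)}$, testing against the Lagrange basis shows $u_a\rightharpoonup 0$ and $\lambda_k\to 0$, and elliptic bootstrap gives $\|u_a\|_{L^\infty}=o(1)$. A linearization using the spectral gap $\int_{\mathbb{S}^1}|f'|^2\ge(m+1)^2\int f^2$ for $f=e^{u_a}-\overline{e^{u_a}}$ orthogonal to $\{e^{ik\theta}\}_{|k|\le m}$ collapses $u_a\equiv 0$ for $a$ near threshold, yielding (\ref{eq6.1}). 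The equality case reduces to the classical case $m=0$ via the conformal covering $\Phi(z)=z^{m+1}$, which satisfies $\|\nabla(v\circ\Phi)\|_{L^2(D)}^2=(m+1)\|\nabla v\|_{L^2(D)}^2$ and $\int_{\mathbb{S}^1}e^{v\circ\Phi}d\theta=\int_{\mathbb{S}^1}e^v\,d\theta$; the classical Lebedev--Milin extremals $v_\xi(w)=\log|1-\xi w|^{-2}$ pull back to the asserted family $u_\xi(z)=\log|1-\xi z^{m+1}|^{-2}$, and the sharp spectral gap forces these to exhaust the equality set.

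The main obstacle I anticipate is the perturbation step: carefully matching the Lagrange multipliers, the Fourier symmetry of $u_a$, and the $(m+1)^2$ spectral gap so that $u_a$ is driven to zero as $a$ approaches the critical value, and then identifying all equality cases through the $(m+1)$-fold covering. By contrast, the Vandermonde cubature count and the boundary concentration--compactness adaptation are routine once the trace constant $\pi$ and the Young pairing $4\pi\alpha$ are correctly aligned.
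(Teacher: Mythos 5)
Your treatment of the cubature count $N_m(\mathbb{S}^1)=m+1$ and the concentration--compactness adaptation to the Beurling--Chang--Marshall inequality matches the paper's Lemma~\ref{lem6.1}. The genuine divergence --- and the gap --- is in how you pass from the $\varepsilon$-loss bound to the sharp constant.

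You propose to run the Section~\ref{sec5} perturbation scheme all the way down to $a=\tfrac{1}{4\pi(m+1)}$. That scheme is structurally unable to reach the sharp threshold, for two reasons. First, the uniform $H^1$ bound on minimizers $u_a$ in Section~\ref{sec5} comes from a \emph{gap} between the almost-sharp coefficient ($\tfrac{1}{16\pi}+\varepsilon$, after Corollary~\ref{cor1.1}) and the anchor value $\tfrac{1}{8\pi}$; the paper explicitly restricts to $a>\tfrac{5}{48\pi}>\tfrac{1}{12\pi}$ to keep a definite margin. In your setting the almost-sharp coefficient is $\tfrac{1}{4\pi(m+1)}+\varepsilon$ and the target is $\tfrac{1}{4\pi(m+1)}$ itself: the estimate $(a-\tfrac{1}{4\pi(m+1)}-\varepsilon)\|\nabla u_a\|_{L^2}^2\le c_\varepsilon$ degenerates, so $\|\nabla u_a\|_{L^2}$ is not bounded as $a\downarrow\tfrac{1}{4\pi(m+1)}$. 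Second, even granting weak convergence $u_{a_i}\rightharpoonup w$, you cannot conclude $w=0$: $w$ would satisfy equality in \eqref{eq6.1} (the very inequality being proved), and the equality set is the nontrivial two-parameter family $u_\xi(z)=\log|1-\xi z^{m+1}|^{-2}$, all of which lie in $\mathcal{S}_m$. The $\mathbb{S}^2$ argument of Section~\ref{sec5} works precisely because the constraints there pin down the \emph{unique} extremal $w=0$ of \eqref{eq1.6}; that rigidity is absent here.

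The paper circumvents both problems by a direct, non-asymptotic argument. After writing the Euler--Lagrange boundary condition for a minimizer of \eqref{eq6.13} with Lagrange multipliers $c_k$, Lemma~\ref{lem6.2} is a purely algebraic Fourier-coefficient identity which forces $c_k=0$ \emph{exactly}, for every $a\in(\tfrac{1}{4\pi(m+1)},\tfrac{1}{4\pi m})$, not just in a limiting regime. Once the multipliers vanish, the minimizer solves the boundary Liouville problem $\partial_\nu v+\tfrac{1}{4\pi a}=e^v$ on $\mathbb{S}^1$ with $\tfrac{1}{4\pi a}\in(m,m+1)$, and \cite[lemma~2.3]{OsPS} gives that $v$ is constant, so $s_a=0$ for the entire open interval. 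Letting $a\downarrow\tfrac{1}{4\pi(m+1)}$ then gives \eqref{eq6.1}. For the equality case, Lemma~\ref{lem6.2} applied at $\tfrac{1}{4\pi a}=m+1$ together with the classification in \cite[theorem~7]{Wa} produces exactly the family $u_\xi$; your $(m+1)$-fold covering $\Phi(z)=z^{m+1}$ is a clean way to \emph{exhibit} these extremals, but on its own does not show they are the only ones --- that requires the ODE/PDE rigidity input.
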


For $m=1$, (\ref{eq6.1}) is proved in \cite{OsPS} by variational method. As
observed in \cite{Wi}, (\ref{eq6.1}) follows from the work of
Grenander-Szego \cite{GrS} on Toeplitz determinants.

On $\mathbb{S}^{1}$, the Moser-Trudinger inequality (\ref{eq1.2}) is
replaced by the Beurling-Chang-Marshall inequality (see \cite[corollary 2]%
{ChM}): for $u\in H^{1}\left( D\right) \backslash \left\{ 0\right\} $ with $%
\int_{\mathbb{S}^{1}}ud\theta =0$, we have%
\begin{equation}
\int_{\mathbb{S}^{1}}e^{\pi \frac{u^{2}}{\left\Vert \nabla u\right\Vert
_{L^{2}\left( D\right) }^{2}}}d\theta \leq c.  \label{eq6.2}
\end{equation}

Similar to (\ref{eq1.9})--(\ref{eq1.12}), for any nonnegative integer $k$,
we write%
\begin{eqnarray}
\mathcal{P}_{k} &=&\left\{ \text{real polynomials on }\mathbb{R}^{2}\text{
with degree at most }k\right\} ;  \label{eq6.3} \\
\overset{\circ }{\mathcal{P}}_{k} &=&\left\{ p\in \mathcal{P}_{k}:\int_{%
\mathbb{S}^{1}}pd\theta =0\right\} ;  \label{eq6.4} \\
H_{k} &=&\left\{ \text{degree }k\text{ homogeneous real polynomials on }%
\mathbb{R}^{2}\right\} ;  \label{eq6.5} \\
\mathcal{H}_{k} &=&\left\{ h\in H_{k}:\Delta _{\mathbb{R}^{2}}h=0\right\}
=span_{\mathbb{R}}\left\{ \func{Re}\left( z^{k}\right) ,\func{Im}\left(
z^{k}\right) \right\} .  \label{eq6.6}
\end{eqnarray}%
Note that%
\begin{equation}
\left. \mathcal{H}_{k}\right\vert _{\mathbb{S}^{1}}=span_{\mathbb{R}}\left\{
\cos k\theta ,\sin k\theta \right\}  \label{eq6.7}
\end{equation}%
and%
\begin{equation}
\left. \overset{\circ }{\mathcal{P}}_{k}\right\vert _{\mathbb{S}^{1}}=span_{%
\mathbb{R}}\left\{ \cos j\theta ,\sin j\theta :j\in \mathbb{N},j\leq
k\right\} .  \label{eq6.8}
\end{equation}

Corresponds to Definition \ref{def1.1}, we have for $m\in \mathbb{N}$,%
\begin{eqnarray}
&&\mathcal{N}_{m}\left( \mathbb{S}^{1}\right)  \label{eq6.9} \\
&=&\left\{ N\in \mathbb{N}:\exists z_{1},\cdots ,z_{N}\in \mathbb{S}^{1}%
\text{ and }\nu _{1},\cdots ,\nu _{N}\in \left[ 0,\infty \right) \text{ s.t.
for any }p\in \mathcal{P}_{m}\text{,}\right.  \notag \\
&&\left. \nu _{1}p\left( z_{1}\right) +\cdots +\nu _{N}p\left( z_{N}\right) =%
\frac{1}{2\pi }\int_{\mathbb{S}^{1}}pd\theta \text{.}\right\}  \notag
\end{eqnarray}%
and $N_{m}\left( \mathbb{S}^{1}\right) =\min \mathcal{N}_{m}\left( \mathbb{S}%
^{1}\right) $. Unlike the case on $\mathbb{S}^{2}$, it is known that%
\begin{equation}
N_{m}\left( \mathbb{S}^{1}\right) =m+1.  \label{eq6.10}
\end{equation}%
Indeed if $N\in \mathcal{N}_{m}\left( \mathbb{S}^{1}\right) $, we must have $%
N\geq m+1$. Otherwise, for the $z_{1},\cdots ,z_{N}\in \mathbb{S}^{1}$ in (%
\ref{eq6.9}), we let $f\left( z\right) =\left( z-z_{1}\right) \cdots \left(
z-z_{N}\right) $, then $\func{Re}f,\func{Im}f\in \mathcal{P}_{m}$. It
follows that%
\begin{equation*}
\frac{1}{2\pi }\int_{\mathbb{S}^{1}}fd\theta =\nu _{1}f\left( z_{1}\right)
+\cdots +\nu _{N}f\left( z_{N}\right) =0.
\end{equation*}%
On the other hand, we clearly have%
\begin{equation*}
\frac{1}{2\pi }\int_{\mathbb{S}^{1}}fd\theta =\left( -1\right)
^{N}z_{1}\cdots z_{N}\neq 0.
\end{equation*}%
This gives us a contradiction. Hence $N_{m}\left( \mathbb{S}^{1}\right) \geq
m+1$. On the other hand, for $1\leq k\leq m+1$, we let $\nu _{k}=\frac{1}{m+1%
}$ and $z_{k}=e^{\frac{2k\pi }{m+1}i}$. It follows that $m+1\in \mathcal{N}%
_{m}\left( \mathbb{S}^{1}\right) $. Hence $N_{m}\left( \mathbb{S}^{1}\right)
=m+1$.

Now we are ready to state the analogue of Theorem \ref{thm1.1} on $\mathbb{S}%
^{1}$.

\begin{lemma}
\label{lem6.1} Assume $m\in \mathbb{N}$, $u\in H^{1}\left( D\right) $ such
that $\int_{\mathbb{S}^{1}}ud\theta =0$ and $\int_{\mathbb{S}%
^{1}}e^{u}e^{ik\theta }d\theta =0$ for $1\leq k\leq m$, then for any $%
\varepsilon >0$ we have%
\begin{eqnarray}
\log \int_{\mathbb{S}^{1}}e^{u}d\theta &\leq &\left( \frac{1}{4\pi
N_{m}\left( \mathbb{S}^{1}\right) }+\varepsilon \right) \left\Vert \nabla
u\right\Vert _{L^{2}\left( D\right) }^{2}+c_{\varepsilon }  \label{eq6.11} \\
&=&\left( \frac{1}{4\pi \left( m+1\right) }+\varepsilon \right) \left\Vert
\nabla u\right\Vert _{L^{2}\left( D\right) }^{2}+c_{\varepsilon }.  \notag
\end{eqnarray}
\end{lemma}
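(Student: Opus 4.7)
The plan is to mirror the proof of Theorem \ref{thm1.1} exactly, replacing the Moser-Trudinger inequality (\ref{eq1.2}) with the Beurling-Chang-Marshall inequality (\ref{eq6.2}), and interpreting the limiting measure $\nu$ as living on the boundary $\mathbb{S}^{1}=\partial D$ rather than on the ambient surface. So I would argue by contradiction: assume (\ref{eq6.11}) fails for some $\alpha=\frac{1}{4\pi N_m(\mathbb{S}^1)}+\varepsilon=\frac{1}{4\pi(m+1)}+\varepsilon$, pick a violating sequence $v_i\in H^1(D)$ with $\overline{v_i}_{\mathbb{S}^1}=0$ and the orthogonality $\int_{\mathbb{S}^1}e^{v_i}e^{ik\theta}d\theta=0$ for $1\le k\le m$. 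From (\ref{eq6.2}) one concludes $m_i:=\|\nabla v_i\|_{L^2(D)}\to\infty$. Setting $u_i=v_i/m_i$, one passes (along a subsequence) to a weak $H^1(D)$ limit $u$, and obtains weak-$\ast$ convergences
\begin{equation*}
|\nabla u_i|^2\,dx\longrightarrow |\nabla u|^2\,dx+\sigma\quad\text{on }\overline{D},\qquad \frac{e^{m_iu_i}\,d\theta}{\int_{\mathbb{S}^1}e^{m_iu_i}d\theta}\longrightarrow\nu\quad\text{on }\mathbb{S}^1,
\end{equation*}
where $\sigma$ is a nonnegative Radon measure on $\overline{D}$ of total mass at most $1$, and $\nu$ is a probability measure on $\mathbb{S}^1$.

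Next I would establish the $\mathbb{S}^1$-analogues of Lemma \ref{lem2.2} and Proposition \ref{prop2.1}. The key technical step is: if $K\subset \overline{D}$ is compact with $\sigma(K)<1$, then $e^{\pi p u_i^2}$ is bounded in $L^1(K\cap\mathbb{S}^1)$ for any $p<1/\sigma(K)$. This is proved by cutting off $v_i-u$ by a smooth $\varphi$ equal to $1$ near $K$ with $\int\varphi^2\,d\sigma<1/p$, noting $\|\nabla(\varphi(v_i-u))\|_{L^2(D)}^2\to\int\varphi^2d\sigma<1/p$, and applying (\ref{eq6.2}) together with the basic localization argument from the proof of Lemma \ref{lem2.2} (the only change is that the exponential integral is taken on the boundary slice $K\cap\mathbb{S}^1$). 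From the elementary inequality $m_iu_i\le\pi pu_i^2+\frac{m_i^2}{4\pi p}$ and the assumption $\log\int_{\mathbb{S}^1}e^{m_iu_i}d\theta\ge\alpha m_i^2$, one then concludes exactly as in Proposition \ref{prop2.1} that $\nu$ is supported on the finite set
\begin{equation*}
\{z\in\mathbb{S}^1:\sigma(\{z\})\ge 4\pi\alpha\}=\{z_1,\dots,z_N\},
\end{equation*}
so $\nu=\sum_{j=1}^N\nu_j\delta_{z_j}$ with $\nu_j\ge 0$ and $\sum\nu_j=1$.

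Now the hypotheses pass to the limit: since $\int_{\mathbb{S}^1}e^{v_i}e^{ik\theta}d\theta=0$ and $e^{v_i}/\int e^{v_i}\to\nu$ (here one uses that $|e^{v_i}-e^{m_iu_i}|$ and the normalization behave correctly under the sub-critical regime established above), one gets $\int e^{ik\theta}d\nu=0$ for $1\le k\le m$, hence $\sum\nu_jp(z_j)=0$ for every $p\in\overset{\circ}{\mathcal{P}}_m$. By (\ref{eq6.9}) this forces $N\in\mathcal{N}_m(\mathbb{S}^1)$ and therefore $N\ge N_m(\mathbb{S}^1)=m+1$. Combined with the mass estimate
\begin{equation*}
4\pi\alpha\,N\le\sum_{j=1}^N\sigma(\{z_j\})\le\sigma(\overline{D})\le 1,
\end{equation*}
we obtain $\alpha\le\frac{1}{4\pi(m+1)}$, contradicting the choice of $\alpha$.

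The main obstacle I anticipate is verifying that the boundary-concentration framework actually produces a measure $\nu$ on $\mathbb{S}^1$ with the stated support property; this is where the geometry is different from Section \ref{sec2}, because $|\nabla u_i|^2\,dx$ is a bulk quantity on $D$ but $e^{u_i}\,d\theta$ is a trace quantity on $\partial D$. The Beurling-Chang-Marshall inequality already contains exactly the right trace control to make the cutoff/localization argument of Lemma \ref{lem2.2} go through for the boundary exponential integral, and Lemma \ref{lem2.1}'s analogue (that $e^{au^2}\in L^1(\mathbb{S}^1)$ for all $a>0$ whenever $u\in H^1(D)$) follows the same way. Once that analytic framework is in place, the combinatorial conclusion uses only $N_m(\mathbb{S}^1)=m+1$ and runs identically to Section \ref{sec3}.
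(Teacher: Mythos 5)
Your proposal is correct and is essentially exactly the proof the paper has in mind; the paper itself simply remarks that Lemma~\ref{lem6.1} follows by "replacing (\ref{eq1.2}) with (\ref{eq6.2}) and following the approach in Section \ref{sec2} and Section \ref{sec3}," and leaves the details to the reader, which is precisely what you have carried out (including getting the constant bookkeeping right: the factor $\pi$ in (\ref{eq6.2}) versus $4\pi$ in (\ref{eq1.2}) exactly compensates for the missing factor of $2$ in the exponent $e^{u}$ versus $e^{2u}$, so the Young inequality $m_i u_i \le \pi p u_i^2 + \frac{m_i^2}{4\pi p}$ reproduces the same threshold $\sigma(\{z\}) \ge 4\pi\alpha$). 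One small redundancy: your parenthetical about $|e^{v_i} - e^{m_i u_i}|$ is vacuous since $v_i = m_i u_i$ by definition, so passing the orthogonality constraints to the limit is immediate from weak-$\ast$ convergence of $\frac{e^{v_i}\,d\theta}{\int_{\mathbb{S}^1} e^{v_i}\,d\theta}$ against the bounded continuous functions $e^{ik\theta}$.
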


Note that for $m=1$, Lemma \ref{lem6.1} is treated in \cite[lemma 2.5]{OsPS}%
. We can prove Lemma \ref{lem6.1} by replacing (\ref{eq1.2}) with (\ref%
{eq6.2}) and following the approach in Section \ref{sec2} and Section \ref%
{sec3}. The detail is left to interested readers.

To continue we denote%
\begin{equation}
\mathcal{S}_{m}=\left\{ u\in H^{1}\left( D\right) :\int_{\mathbb{S}%
^{1}}ud\theta =0,\int_{\mathbb{S}^{1}}e^{u}e^{ik\theta }d\theta =0\text{ for 
}k=1,\cdots ,m\right\} .  \label{eq6.12}
\end{equation}%
Let $a\in \left( \frac{1}{4\pi \left( m+1\right) },\frac{1}{4\pi m}\right) $%
, then it follows from Lemma \ref{lem6.1} that%
\begin{equation}
\inf_{u\in \mathcal{S}_{m}}\left[ a\left\Vert \nabla u\right\Vert
_{L^{2}\left( D\right) }^{2}-\log \left( \frac{1}{2\pi }\int_{\mathbb{S}%
^{1}}e^{u}d\theta \right) \right]  \label{eq6.13}
\end{equation}%
is achieved.

Let $u$ be a minimizer for (\ref{eq6.13}), then $u$ is smooth and for some
real numbers $\beta _{k}$ and $\gamma _{k}$,

\begin{eqnarray*}
-\Delta u &=&0\text{ in }D; \\
2a\frac{\partial u}{\partial \nu }-\frac{e^{u}}{\int_{\mathbb{S}%
^{1}}e^{u}d\theta } &=&-\frac{1}{2\pi }+\sum_{k=1}^{m}\left( \beta _{k}\cos
k\theta +\gamma _{k}\sin k\theta \right) e^{u}.
\end{eqnarray*}%
Here $\nu $ is the unit outer normal direction of $\mathbb{S}^{1}$. Let%
\begin{equation}
v=u-\log \left( 2a\int_{\mathbb{S}^{1}}e^{u}d\theta \right) ,  \label{eq6.14}
\end{equation}%
then $v$ is smooth and%
\begin{eqnarray*}
-\Delta v &=&0\text{ in }D; \\
\frac{\partial v}{\partial \nu }+\frac{1}{4\pi a} &=&e^{v}+\sum_{k=1}^{m}%
\left( c_{k}e^{ik\theta }+\overline{c_{k}}e^{-ik\theta }\right) e^{v}; \\
\int_{\mathbb{S}^{1}}e^{v}e^{ik\theta }d\theta &=&0\text{ for }k=1,\cdots ,m.
\end{eqnarray*}%
Here $c_{1},\cdots ,c_{m}$ are complex constants. Next we claim $c_{k}=0$
for all $k$. For the case $m=1$, this is proved in \cite[lemma 2.6]{OsPS}.

\begin{lemma}
\label{lem6.2}Let $m\in \mathbb{N}$, $\alpha >0$, $v\in C^{\infty }\left( 
\overline{D}\right) $ such that $\int_{\mathbb{S}^{1}}e^{v}e^{ik\theta
}d\theta =0$ for $k=1,\cdots ,m$ and%
\begin{eqnarray}
-\Delta v &=&0\text{ in }D;  \label{eq6.15} \\
\frac{\partial v}{\partial \nu }+\alpha &=&e^{v}+\sum_{k=1}^{m}\left(
c_{k}e^{ik\theta }+\overline{c_{k}}e^{-ik\theta }\right) e^{v};
\label{eq6.16}
\end{eqnarray}%
here $\nu $ is the unit outer normal direction of $\mathbb{S}^{1}$ and $%
c_{1},\cdots ,c_{m}$ are complex constants, then $c_{k}=0$ for $1\leq k\leq
m $.
\end{lemma}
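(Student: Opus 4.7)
The plan is to extract linear algebraic identities from the boundary equation (\ref{eq6.16}) and combine them with the orthogonality hypothesis to force each $c_k = 0$. The starting point is that since $v$ is harmonic in $D$, the antiholomorphic function $\bar z^j$ is the harmonic extension of $e^{-ij\theta}$ from $\mathbb{S}^1$ into $D$, with $\partial_\nu\bar z^j|_{\mathbb{S}^1} = je^{-ij\theta}$. Multiplying (\ref{eq6.16}) by $e^{-ij\theta}$ for $1\le j\le m$ and integrating over $\mathbb{S}^1$, Green's identity converts $\int_{\mathbb{S}^1}\partial_\nu v\cdot e^{-ij\theta}\,d\theta$ into $j\int_{\mathbb{S}^1}v\,e^{-ij\theta}\,d\theta$. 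On the right-hand side, the orthogonality $\int_{\mathbb{S}^1}e^v e^{ik\theta}\,d\theta=0$ for $1\le|k|\le m$ annihilates most of the convolution terms, leaving $m$ complex identities of the form
\begin{equation*}
j\int_{\mathbb{S}^1}v\,e^{-ij\theta}\,d\theta \;=\; 2\pi\alpha\,c_j + \sum_{k=m+1-j}^{m}\overline{c_k}\,\overline{E_{k+j}},\qquad 1\le j\le m,
\end{equation*}
where $E_\ell := \int_{\mathbb{S}^1}e^v e^{i\ell\theta}\,d\theta$ denotes the Fourier tail modes (nonzero in general for $|\ell|\ge m+1$). This yields relations among the $c_j$, the trace Fourier data of $v$, and the unknown tail $(E_\ell)$.

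These identities alone are insufficient: the unknowns greatly outnumber the $m$ complex equations. To close the system I would exploit the M\"obius symmetry of the disk. For $\eta\in\mathbb{C}$, the family $T_{t\eta}v := v\circ\phi_{t\eta}+\log|\phi_{t\eta}'|$ with $\phi_{t\eta}(z) = (z+t\eta)/(1+t\bar\eta z)$ produces harmonic perturbations of $v$. A direct computation shows that $\|\nabla T_{t\eta}v\|_{L^2(D)}^2$ and $\int_{\mathbb{S}^1}e^{T_{t\eta}v}\,d\theta$ are preserved by the flow, whereas the constraint integrals $\int e^{T_{t\eta}v}e^{ik\theta}\,d\theta$ transform nontrivially (via the boundary change of variables $\theta\mapsto\theta+2t\,\mathrm{Im}(\bar\eta e^{i\theta})+O(t^2)$). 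Linearizing the equation satisfied by $T_{t\eta}v$ at $t=0$ shows that the infinitesimal generator $w_\eta := \left.\frac{d}{dt}\right|_0 T_{t\eta}v$ is harmonic on $D$ and its trace satisfies $\partial_\nu w_\eta - e^v K\,w_\eta = g_\eta$ on $\mathbb{S}^1$, with $g_\eta$ an explicit boundary expression in $\eta$, $\alpha$, $v$, and $(c_k)$. Pairing this against harmonic test functions (using Green's identity once more) produces $m$ further complex relations on the same unknowns.

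Combining the two families of identities gives a linear system in $(c_1,\ldots,c_m)$, which I expect to solve by descending induction on $k$: the identity for $j=m$ couples $c_m$ only to $\overline{c_m}\,\overline{E_{2m}}$ on the nonlinear side, and once the M\"obius-based identity is incorporated one concludes $c_m = 0$; the identity for $j = m-1$ then reduces similarly, and so forth. The main obstacle lies in this last step: the tail coefficients $E_\ell$ depend nonlinearly on $v$ and are not a priori controlled, so the descending induction requires a careful cancellation between the two families of identities. The $m = 1$ case was settled in \cite[lemma 2.6]{OsPS} using a single M\"obius direction; the present statement matches $m$ complex M\"obius directions against the $m$ orthogonality constraints in an analogous fashion.
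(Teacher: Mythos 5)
Your first family of identities is exactly the paper's equation (6.18) in integrated form, and you correctly diagnose the problem: the unknown tail Fourier coefficients $E_\ell$ of $e^v$ for $|\ell|\ge m+1$ make the system underdetermined. However, your plan does not actually close this gap, and you acknowledge as much (``the main obstacle lies in this last step\ldots requires a careful cancellation'' that you do not carry out). That acknowledgment is the sign of a genuine missing idea, not merely an omitted calculation.

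The paper closes the system with an elementary observation you do not use: differentiate $e^v$ along the boundary to get $\partial_\theta(e^v) = e^v\,\partial_\theta v$, which in Fourier coefficients gives the bilinear convolution identity $k b_k = \sum_j j\,a_j\,b_{k-j}$ (the paper's (6.19)), where $a_j,b_j$ are the Fourier coefficients of $v|_{\mathbb{S}^1}$ and $e^v|_{\mathbb{S}^1}$ respectively. Substituting the boundary-equation relation $|j| a_j = b_j + \sum_s c_s b_{j-s} + \sum_s \overline{c_s} b_{j+s}$ into this second identity, and then using $b_j=0$ for $1\le |j|\le m$ together with $b_0=\alpha$, all of the tail coefficients drop out of the $k$-th equation for $1\le k\le m$, leaving $\alpha^2 c_k = 0$ directly. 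This is precisely the cancellation you hoped to engineer by M\"obius methods. Your M\"obius route has a further structural difficulty: Lemma~\ref{lem6.2} is a statement about any smooth solution of the boundary-value problem (\ref{eq6.15})--(\ref{eq6.16}), not about minimizers, so the conformal-invariance argument (which infinitesimally perturbs a critical point of a constrained functional) does not apply as stated and would need to be reformulated before it could even be attempted. In short, the decisive step --- the identity $\partial_\theta(e^v)=e^v\partial_\theta v$ as a second Fourier relation --- is absent from your proposal, and without it the argument does not go through.
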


\begin{proof}
We write%
\begin{eqnarray*}
\left. v\right\vert _{\mathbb{S}^{1}} &=&\sum_{k=-\infty }^{\infty
}a_{k}e^{ik\theta },\quad a_{k}\in \mathbb{C},\overline{a_{k}}=a_{-k}; \\
\left. e^{v}\right\vert _{\mathbb{S}^{1}} &=&\sum_{k=-\infty }^{\infty
}b_{k}e^{ik\theta },\quad b_{k}\in \mathbb{C},\overline{b_{k}}=b_{-k}.
\end{eqnarray*}%
It follows from the assumption that%
\begin{equation}
b_{k}=0\text{ for }1\leq \left\vert k\right\vert \leq m.  \label{eq6.17}
\end{equation}%
Using (\ref{eq6.15}) and (\ref{eq6.16}) we see%
\begin{equation*}
\sum_{k=-\infty }^{\infty }\left\vert k\right\vert a_{k}e^{ik\theta }+\alpha
=\left( 1+\sum_{j=1}^{m}c_{j}e^{ij\theta }+\sum_{j=1}^{m}\overline{c_{j}}%
e^{-ij\theta }\right) \sum_{k=-\infty }^{\infty }b_{k}e^{ik\theta }.
\end{equation*}%
Compare the constant term on both sides and using (\ref{eq6.17}) we get $%
b_{0}=\alpha $. On the other hand, for $k\neq 0$, we have%
\begin{equation}
\left\vert k\right\vert a_{k}=b_{k}+\sum_{j=1}^{m}c_{j}b_{k-j}+\sum_{j=1}^{m}%
\overline{c_{j}}b_{k+j}.  \label{eq6.18}
\end{equation}%
Next we observe that%
\begin{equation*}
\partial _{\theta }\left( e^{v}\right) =e^{v}\partial _{\theta }v,
\end{equation*}%
hence%
\begin{equation*}
\sum_{k=-\infty }^{\infty }kb_{k}e^{ik\theta }=\left( \sum_{j=-\infty
}^{\infty }ja_{j}e^{ij\theta }\right) \left( \sum_{k=-\infty }^{\infty
}b_{k}e^{ik\theta }\right) .
\end{equation*}%
It follows that%
\begin{equation}
kb_{k}=\sum_{j=-\infty }^{\infty }ja_{j}b_{k-j}.  \label{eq6.19}
\end{equation}%
Plug (\ref{eq6.18}) into (\ref{eq6.19}), we get%
\begin{equation*}
kb_{k}=\sum_{j=-\infty }^{\infty }\limfunc{sgn}\left( j\right) \left[
b_{j}+\sum_{s=1}^{m}c_{s}b_{j-s}+\sum_{s=1}^{m}\overline{c_{s}}b_{j+s}\right]
b_{k-j}.
\end{equation*}%
In particular, for $1\leq k\leq m$, it becomes%
\begin{eqnarray*}
kb_{k}
&=&\sum_{j=1}^{k}b_{j}b_{k-j}+\sum_{s=1}^{m}c_{s}%
\sum_{j=1}^{k+s}b_{j-s}b_{k-j}+\sum_{s=1}^{k}\overline{c_{s}}%
\sum_{j=1}^{k-s}b_{j+s}b_{k-j} \\
&&+\sum_{s=k+1}^{m}\overline{c_{s}}\sum_{j=k-s+1}^{0}b_{j+s}b_{k-j}.
\end{eqnarray*}%
Using (\ref{eq6.17}) we get $\alpha ^{2}c_{k}=0$, hence $c_{k}=0$.
\end{proof}

It follows from Lemma \ref{lem6.2} that the function $v$ defined in (\ref%
{eq6.14}) satisfies%
\begin{eqnarray*}
-\Delta v &=&0\text{ in }D; \\
\frac{\partial v}{\partial \nu }+\frac{1}{4\pi a} &=&e^{v}\text{ on }\mathbb{%
S}^{1}.
\end{eqnarray*}%
Since $\frac{1}{4\pi a}\in \left( m,m+1\right) $, it follows from \cite[%
lemma 2.3]{OsPS} that $v$ is a constant function. Hence any minimizer of (%
\ref{eq6.13}) must be $0$. In another word, for any $u\in \mathcal{S}_{m}$,%
\begin{equation*}
\log \left( \frac{1}{2\pi }\int_{\mathbb{S}^{1}}e^{u}d\theta \right) \leq
a\left\Vert \nabla u\right\Vert _{L^{2}\left( D\right) }^{2}.
\end{equation*}%
Let $a\rightarrow \frac{1}{4\pi \left( m+1\right) }$, we get (\ref{eq6.1}).

If $u\in \mathcal{S}_{m}$ such that%
\begin{equation*}
\log \left( \frac{1}{2\pi }\int_{\mathbb{S}^{1}}e^{u}d\theta \right) =\frac{%
\left\Vert \nabla u\right\Vert _{L^{2}\left( D\right) }^{2}}{4\pi \left(
m+1\right) },
\end{equation*}%
then $u$ is smooth and for some real numbers $\beta _{k}$ and $\gamma _{k}$,

\begin{eqnarray*}
-\Delta u &=&0\text{ in }D; \\
\frac{1}{2\pi \left( m+1\right) }\frac{\partial u}{\partial \nu }-\frac{e^{u}%
}{\int_{\mathbb{S}^{1}}e^{u}d\theta } &=&-\frac{1}{2\pi }+\sum_{k=1}^{m}%
\left( \beta _{k}\cos k\theta +\gamma _{k}\sin k\theta \right) e^{u}.
\end{eqnarray*}%
Let%
\begin{equation*}
v=u-\log \frac{\int_{\mathbb{S}^{1}}e^{u}d\theta }{2\pi \left( m+1\right) },
\end{equation*}%
it follows from Lemma \ref{lem6.2} that%
\begin{eqnarray*}
-\Delta v &=&0\text{ in }D; \\
\frac{\partial v}{\partial \nu }+m+1 &=&e^{v}\text{ on }\mathbb{S}^{1}.
\end{eqnarray*}%
By \cite[theorem 7]{Wa}, we can find $\xi \in \mathbb{C}$ with $\left\vert
\xi \right\vert <1$ such that%
\begin{equation*}
v\left( z\right) =\log \frac{\left( m+1\right) \left( 1-\left\vert \xi
\right\vert ^{2}\right) }{\left\vert 1-\xi z^{m+1}\right\vert ^{2}}.
\end{equation*}%
Using the fact $\int_{\mathbb{S}^{1}}ud\theta =0$, we see $u\left( z\right)
=\log \frac{1}{\left\vert 1-\xi z^{m+1}\right\vert ^{2}}$.

At last calculation shows for any $\xi \in \mathbb{C}$ with $\left\vert \xi
\right\vert <1$, if we write $u_{\xi }\left( z\right) =\log \frac{1}{%
\left\vert 1-\xi z^{m+1}\right\vert ^{2}}$, then $u_{\xi }\in \mathcal{S}%
_{m} $ and%
\begin{equation*}
\log \left( \frac{1}{2\pi }\int_{\mathbb{S}^{1}}e^{u_{\xi }}d\theta \right)
=\log \frac{1}{1-\left\vert \xi \right\vert ^{2}}=\frac{1}{4\pi \left(
m+1\right) }\left\Vert \nabla u_{\xi }\right\Vert _{L^{2}\left( D\right)
}^{2}.
\end{equation*}%
Theorem\ \ref{thm6.1} follows.

\end{document}